\numberwithin{equation}{section}
\newtheorem{theorem}{Theorem}[section]
\newtheorem{proposition}[theorem]{Proposition}
\newtheorem{lemma}[theorem]{Lemma}
\newtheorem{remark}[theorem]{Remark}
\newtheorem{example}[theorem]{Example}
\newtheorem{definition}[theorem]{Definition}
\def\ZZ{\mathbb Z}
\def\ZZ{\mathbb{Z}}
\renewcommand{\eqref}[1]{{\rm (\ref{#1})}}
\begin{document}

\title[Some properties of generalized  cluster algebras of geometric types]
{Some properties of generalized  cluster algebras of geometric types}

\author{Junyuan Huang, Xueqing Chen, Fan Xu and Ming Ding}
\address{School of Mathematics and Information Science\\
Guangzhou University, Guangzhou 510006, P.R.China}
\email{jy4545@e.gzhu.edu.cn (J.Huang)}
\address{Department of Mathematics,
 University of Wisconsin-Whitewater\\
800 W. Main Street, Whitewater, WI.53190. USA}
\email{chenx@uww.edu (X.Chen)}
\address{Department of Mathematical Sciences\\
Tsinghua University\\
Beijing 100084, P.~R.~China} \email{fanxu@mail.tsinghua.edu.cn (F.Xu)}
\address{School of Mathematics and Information Science\\
Guangzhou University, Guangzhou 510006, P.R.China}
\email{dingming@gzhu.edu.cn (M.Ding)}

%\keywords{$\mathbb{Z}[q^{\pm \frac{1}{2}}]-$basis, quantum cluster
%algebra.}

%\date{April 10, 2010}
%\thanks{Ming Ding was supported by NSF of China (No. 11301282) and Specialized Research Fund for the Doctoral Program of Higher Education (No. 20130031120004) and Fan Xu was supported by NSF of China (No. 11471177).}

%    General info
%\subjclass[2000]{Primary  16G20, 17B67; Secondary  17B35, 18E30}

%\date{\today}

\keywords{generalized  cluster algebra, cluster variable, generalized projective cluster variable, standard monomial}

\maketitle

\begin{abstract}
We study the  lower bound algebras generated by the generalized projective cluster variables of acyclic generalized cluster algebras of geometric types.  We prove that this lower bound algebra coincides with the corresponding generalized cluster algebra under the coprimality condition.   As a corollary, we  obtain  the dual PBW bases of these generalized cluster algebras. Moreover, we show that if the standard monomials of a generalized cluster algebra of geometric type are linearly independent, then the directed graph associated to the initial generalized seed of this generalized cluster algebra does not have 3-cycles.
\end{abstract}

%\tableofcontents

\section{Background}
Fomin and Zelevinsky invented cluster algebras \cite{ca1,ca2} in order
to give an algebraic framework for studying the theory of total positivity and canonical bases
in quantum groups  developed by Lusztig.
 Berenstein, Fomin and
Zelevinsky  defined the lower bound in~\cite{bfz} which is a subalgebra of the associated cluster algebra generated by specific finitely many cluster variables with better behavior. They
proved that a cluster algebra coincides with
the lower bound  if and only if it  possesses an~acyclic seed. Under this acyclicity condition, the standard monomial bases of~these kinds of~cluster algebras can be naturally constructed.

Recently, Baur and Nasr-Isfahani \cite{BN} introduced the notion of another lower bound cluster algebra which is generated by the initial cluster variables and projective cluster variables. They showed that, under an
assumption of “acyclicity”,  a cluster algebra coincides with its  lower bound cluster algebra, and further constructed a basis for this cluster algebra.  Note that Qin pointed out that this basis should agree with an associated dual
PBW basis in the language of \cite{KQ}, which plays an important role in defining the triangular basis \cite{Q1}\cite{Q2}.

Chekhov and Shapiro introduced the notion of  generalized cluster algebras  \cite{CS}  in order to study the Teichm\"{u}ller spaces of~Riemann surfaces with holes and orbifold points.
In this generalization, the hallmark binomial exchange relations of cluster algebras  are replaced with polynomials with arbitrarily many terms in
generalized cluster algebras.  Generalized cluster algebras also possess the Laurent phenomenon \cite{CS}  and  are studied in a similar way as cluster algebras (see for example \cite{CL1, CL2, Mou, nak2}).
Gekhtman, Shapiro and Vainshtein \cite{gsv18} proved that generalized upper cluster algebras coincide with upper bounds under certain coprimality conditions, which has been extended by Bai, Chen, Ding and Xu \cite{BCDX-2} in the quantum setting. Later, Bai, Chen, Ding and Xu \cite{BCDX-1} proved that the conditions of acyclicity
and coprimality close the gap between lower bounds and upper bounds associated to generalized
cluster algebras, and consequently obtained the standard monomial bases of these  algebras.
Recently, Du and Li \cite{DL} proved that for the Laurent phenomenon algebra, the above results also hold under  certain conditions.

The aim of this paper is a continuation to investigate the structure theory of generalized cluster algebras. We will firstly construct the dual PBW bases of acyclic generalized cluster algebras of geometric types under the coprimality condition;
then we show that if the standard monomials of a generalized cluster algebra of geometric type are linearly independent, the directed graph associated to the initial generalized seed of this generalized cluster algebra contains no 3-cycles.

\section{Preliminaries}

We recall some terminology,  definitions and related results of~the generalized cluster algebras of~geometric types.

Let $i, j$ be positive integers satisfying $i<j$,  write $[i,j]$ for the set $\{i,i+1,\ldots,j-1,j\}$.
A square integer  matrix $B$ is called skew-symmetrizable if there is a positive integer
diagonal matrix $D$ such that $DB$ is skew-symmetric, where $D$ is said to be the skew-symmetrizer of
$B$.
Let $\widetilde{B}=(b_{ij})$ be an~$m\times n$ integer matrix for some $m\geq n$. The principal part $B$ of~$\widetilde{B}$ is defined to be  its upper $n\times n$ submatrix.
For each $i \in [1,n]$, let $d_i$ be a positive integer such that $d_i$ divides $b_{ji}$ for any $j\in[1,n]$, and  set $\beta_{ji}=\frac{b_{ji}}{d_i}$ for any $j\in[1,m]$.

Denote by $\mathbb{Q}(x_{1},x_2,\ldots,x_{m})$ the function field of~$m$ variables over $\mathbb{Q}$ with a transcendence basis $\{x_1,x_2,\ldots,x_m\}$.
Let $\mathbb{P}$ be the coefficient group which is  the multiplicative free abelian group generated by $\{x_{n+1},\ldots,x_{m} \}$ and $\mathop{\mathbb{ZP}}$ be its integer group ring.
\begin{definition}\label{def of generalized seed}
With above notations, a generalized seed is defined to be a triple $\big(\widetilde{\mathbf{x}},\rho,\widetilde{B}\big)$, where
\begin{itemize}%\itemsep=0pt
 \item[(1)] the set $\widetilde{\mathbf{x}}=\{x_{1},x_2,\ldots, x_{m}\}$ is~called the extended cluster, $\mathbf{x}=\{x_{1},x_2,\ldots,x_{n}\}$ is~called the cluster,
  elements $\{x_1, x_2, \ldots,x_{n}\}$ are called cluster variables and elements $\{x_{n+1},x_2,\ldots,x_{m}\}$ are called frozen variables;
 \item[(2)] $\rho=\{\rho_{1},\rho_2,\ldots,\rho_{n}\}$ is the set of strings, for each $i\in [1,n]$ the $i$-th string is denoted by $\rho_i:=\{\rho_{i,0},\ldots,\rho_{i,d_i}\}$ with $\rho_{i,0}=\rho_{i,d_i}=1$ and $\rho_{i,j}$ being monomials in~$\mathbb{Z} [x_{n+1},\ldots,x_{m}]$ for $j \in [1,d_i-1]$;
\end{itemize}
\end{definition}

Denote the floor function for real numbers $x$ by  $\lfloor x\rfloor$. Define the function
\begin{gather*}
[x]_+=
 \begin{cases}
 x,  &\text{if}\quad x\geq 0;
 \\
0,  & \text{if}\quad x< 0.
 \end{cases}
\end{gather*}

\begin{definition}
For $i\in [1,n]$, the mutation of~a generalized seed $\big(\widetilde{\mathbf{x}},\rho,\widetilde{B}\big)$ in the direction $i$ is another generalized seed \smash{$\mu_i\big(\widetilde{\mathbf{x}},\rho,\widetilde{B}\big):=\big(\widetilde{\mathbf{x}}',
\rho',\widetilde{B}'\big)$}, where
\begin{itemize}\itemsep=0pt
\item[(1)] the set $\widetilde{\mathbf{x}}':=(\widetilde{\mathbf{x}}- \{x_i\})\cup\{x'_{i}\}$ with
 \begin{gather*}
 x_{i}':=\mu_i(x_i)=x_i^{-1}\Big(\sum\limits_{r=0}^{d_i}\rho_{i,r}\prod\limits_{j=1}^{m} x_{j}^{\lfloor[r\beta_{ji}]_+\rfloor +\lfloor[-(d_i-r)\beta_{ji}]_+\rfloor}\Big)\!,
 \end{gather*}
 which is called the exchange relation;
\item[(2)] the set $\rho' :=\mu_i(\rho)=(\rho- \{\rho_i\})\cup\{\rho'_{i}\}$, where $\rho_{i}'=\{\rho'_{i,0},\ldots,\rho'_{i,d_i}\}$ such that $\rho'_{i,r}=\rho_{i,d_i-r}$ for~$0\leq r\leq d_i$;

\item[(3)]
the matrix $\widetilde{B}':=\mu_i\big(\widetilde{B}\big)$ is defined by
\begin{gather*}
b'_{kl}=
 \begin{cases}
 -b_{kl}, &\text{if}\quad k=i\quad \text{or}\quad l=i;
 \\
b_{kl}+\displaystyle\frac{|b_{ki}|b_{il}+b_{ki}|b_{il}|}{2}, &\text{otherwise}.
 \end{cases}
\end{gather*}
\end{itemize}
\end{definition}

Note that $\mu_i$ is an involution.  Two generalized seeds are called mutation-equivalent if one can be obtained from
another by a sequence of mutations.

\sloppy\begin{definition}\label{def of gca}
Given an initial generalized seed $\big(\widetilde{\mathbf{x}}, \rho, \widetilde{B}\big)$, the generalized cluster algebra $\smash{\mathcal{A}\big(\widetilde{\mathbf{x}}, \rho, \widetilde{B}\big)}$ is the $\mathop{\mathbb{ZP}}$-subalgebra of~$\mathbb{Q}(x_1,\ldots,x_m)$ generated by all cluster variables from all gene\-ralized seeds which are mutation-equivalent to $\smash{\big(\widetilde{\mathbf{x}},\rho,\widetilde{B}\big)}$.
\end{definition}
Note that  one can recover the classical cluster algebras by setting $d_i=1$ for all $i \in [1,n]$.

For each $i\in [1,n]$,  we know that
\begin{gather*}
 x_{i}x'_{i}=\sum\limits_{r=0}^{d_i}\rho_{i,r}\prod\limits_{j=1}^{m} x_{j}^{\lfloor[r\beta_{ji}]_+\rfloor +\lfloor[-(d_i-r)\beta_{ji}]_+\rfloor}\in \mathop{\mathbb{ZP}}[x_1,\ldots,x_{i-1},x_{i+1},\ldots,x_n].
\end{gather*}

\begin{definition}
A generalized seed $\big(\widetilde{\mathbf{x}},\rho,\widetilde{B}\big)$ is called coprime if $x_{i}x'_{i}$ and $x_{j}x'_{j}$ are coprime in $\mathbb{ZP}[x_1,\ldots,x_n]$ for~any two different $i$, $j\in [1,n]$.
\end{definition}

The directed graph associated to a generalized seed~$\big(\widetilde{\mathbf{x}},\rho,\widetilde{B}\big)$ is denoted by   $\Gamma\big(\widetilde{\mathbf{x}},\rho,\widetilde{B}\big)$ with vertices $[1,n]$ and the directed edges from $i$ to $j$ if $b_{ij}>0$.

\begin{definition}
 A generalized seed $\big(\widetilde{\mathbf{x}},\rho,\widetilde{B}\big)$ is called acyclic if $\Gamma\big(\widetilde{\mathbf{x}},\rho,\widetilde{B}\big)$ does not contain any oriented cycle.
A~gene\-ra\-li\-zed cluster algebra is called acyclic if it has an~acyclic generalized~seed.
\end{definition}

Chekhov and Shapiro prove the  Laurent phenomenon on generalized  cluster algebras~\cite{CS}, i.e., the generalized cluster algebra $\mathcal{A}\big(\widetilde{\mathbf{x}},\rho,\widetilde{B}\big)$ is a $\ZZ\mathbb{P}$-subalgebra of the ring of  Laurent polynomials in the cluster variables from any cluster.

In the setting of generalized cluster algebras, we can similarly define the notion of standard monomials which is a natural generalization of one in \cite{bfz}.
\begin{definition}\cite{BCDX-1}
Let~$\big(\widetilde{\mathbf{x}},\rho,\widetilde{B}\big)$ be a generalized seed. A~standard monomial in ${x_1},{x'_1}, \ldots ,{x_n},{x'_n}$ is a monomial that contains no product of~the form $x_ix'_i$ for any $i\in [1,n]$.
\end{definition}

The following theorem, which is parallel to the corresponding
result in \cite{bfz}, will be useful for us to  construct the dual PBW bases for generalized cluster algebras under certain conditions.
\begin{theorem}\cite{BCDX-1}\label{standard-1}
Let  $( {\widetilde{\mathbf{x}},\rho,{\widetilde B}})$ is an acyclic and coprime generalized seed, then the set of standard monomials in ${x_1},{x'_1}, \ldots ,{x_n},{x'_n}$ is a $\ZZ\mathbb{P}$-basis of  the generalized cluster algebra $\mathcal{A}\big(\widetilde{\mathbf{x}},\rho,\widetilde{B}\big)$.
\end{theorem}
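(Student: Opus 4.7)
The plan is to adapt the Berenstein--Fomin--Zelevinsky strategy of \cite{bfz} to the generalized setting. Introduce the \emph{lower bound} $\mathcal{L}(\widetilde{\mathbf{x}}, \rho, \widetilde{B})$ as the $\ZZ\mathbb{P}$-subalgebra of $\mathcal{A}(\widetilde{\mathbf{x}}, \rho, \widetilde{B})$ generated by $\{x_1, x'_1, \ldots, x_n, x'_n\}$, and split the proof into three parts: (a) the standard monomials span $\mathcal{L}$; (b) they are $\ZZ\mathbb{P}$-linearly independent inside $\mathcal{A}(\widetilde{\mathbf{x}}, \rho, \widetilde{B})$; (c) under acyclicity and coprimality, $\mathcal{L}$ coincides with $\mathcal{A}(\widetilde{\mathbf{x}}, \rho, \widetilde{B})$.

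For (a), start with an arbitrary monomial in the generators and repeatedly rewrite each factor of the form $x_i x'_i$ using the generalized exchange relation. Since $\Gamma(\widetilde{\mathbf{x}}, \rho, \widetilde{B})$ has no oriented cycle, fix a total order on $[1,n]$ refining the partial order read off the directed graph, and attach to each monomial a multi-degree compatible with this order. The right-hand side of the generalized exchange relation involves only the $x_j$ and the frozen variables (never any $x'_j$), so every substitution strictly decreases the multi-degree; the procedure therefore terminates and expresses the original monomial as a $\ZZ\mathbb{P}$-linear combination of standard monomials.

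For (b), exploit the Laurent expansion in the initial cluster $\mathbf{x}$ provided by \cite{CS}. Because $\beta_{ii} = 0$, the Laurent polynomial $x'_i$ has denominator exactly $x_i$ and is regular in the remaining cluster variables, so a standard monomial $\prod_i x_i^{a_i}(x'_i)^{b_i}$ (with $a_i b_i = 0$) has, in its Laurent expansion, a distinguished extremal term whose exponent of $x_i$ equals $a_i - b_i$. The constraint $a_i b_i = 0$ makes the map $(a_i, b_i) \mapsto a_i - b_i$ injective on admissible pairs, so distinct standard monomials produce distinct extremal exponent tuples and the extremal terms cannot cancel across a nontrivial $\ZZ\mathbb{P}$-relation, giving linear independence.

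For (c), chain together two results from the literature: coprimality forces the generalized upper cluster algebra to equal the generalized upper bound by the Gekhtman--Shapiro--Vainshtein theorem \cite{gsv18}, and acyclicity together with coprimality forces $\mathcal{L}$ to equal that same generalized upper bound by \cite{BCDX-1}. Combined with the evident inclusion $\mathcal{L} \subseteq \mathcal{A}(\widetilde{\mathbf{x}}, \rho, \widetilde{B})$ contained in the upper cluster algebra (Laurent phenomenon), these identifications squeeze $\mathcal{L} \subseteq \mathcal{A}(\widetilde{\mathbf{x}}, \rho, \widetilde{B}) \subseteq \mathcal{L}$ and yield equality. The principal obstacle is part (a): unlike the binomial classical case, the generalized exchange relation is a sum of $d_i + 1$ terms with exponents $\lfloor[r\beta_{ji}]_+\rfloor + \lfloor[-(d_i-r)\beta_{ji}]_+\rfloor$ involving floor functions for frozen indices, so one must verify carefully that the chosen multi-degree genuinely decreases despite these floor corrections; acyclicity of $\Gamma$ is the ingredient that precludes the circular substitutions that would otherwise make the procedure loop indefinitely.
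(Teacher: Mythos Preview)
The paper does not give its own proof of this theorem: it is stated in the Preliminaries section with the citation \cite{BCDX-1} and is used as a black box (for instance, in the one-line proof of Theorem~\ref{thm1}). So there is no argument in the paper to compare against; your outline is essentially a reconstruction of the proof that lives in \cite{BCDX-1}, and in broad strokes it follows the same Berenstein--Fomin--Zelevinsky template as that reference.

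Two comments on the outline itself. In part~(a) you invoke acyclicity, but spanning of $\mathcal{L}$ by standard monomials needs no such hypothesis: since the right-hand side of the exchange relation $x_ix'_i=P_i$ contains no primed variables at all, each substitution lowers the total primed degree $\sum_i b_i$ by one, so termination is automatic. In part~(b), by contrast, acyclicity is essential and your sketch does not make clear where it enters. The bare observation that $(a_i,b_i)\mapsto a_i-b_i$ is injective on pairs with $a_ib_i=0$ is not enough, because the Laurent expansion of $(x'_i)^{b_i}$ contributes many monomials in the other $x_j$, and without a well-chosen monomial order these can collide across different standard monomials (indeed Theorem~\ref{yheorem6.1} of this very paper shows dependence occurs when a $3$-cycle is present). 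The actual argument, as in \cite{bfz} and \cite{BCDX-1}, fixes a linear order with $b_{ij}\ge 0$ for $i>j$ and uses the lexicographic leading term of $x'_i$, which has the triangular form $x_i^{-1}\cdot(\text{monomial in }x_{i+1},\dots,x_n)$; it is this triangularity, available only in the acyclic case, that makes the leading-exponent map injective. Finally, in part~(c) you cite \cite{BCDX-1} for the equality of lower and upper bounds; that is legitimate as a separate statement, but be aware that the theorem you are proving is itself drawn from \cite{BCDX-1}, so a fully self-contained proof would need to supply that step as well.
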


\section{The dual PBW bases}

Up to simultaneously reordering of columns and rows, we  assume  that the entries in the skew-symmetrizable matrix $B$ satisfy  $b_{ij} \geq 0$ for any $i>j$.
In the following, we let $\Sigma=( {\widetilde{\mathbf{x}},\rho,{\widetilde B}})$ be a generalized seed of an acyclic generalized cluster algebra of geometric type.

\begin{definition}\label{3.1}
For any $i \in [1,n],$  define  a new generalized seed by
$${\Sigma^{(i)}} = ( {{\widetilde{\mathbf{x}}^{(i)}},{\rho ^{(i)}},{{\widetilde B}^{(i)}}} ):= \mu _{{i}}\cdots\mu _{{2}}\mu _{{1}}( {\widetilde{\mathbf{x}},\rho ,{\widetilde B}} ),$$
where ${\widetilde{\mathbf{x}}^{(i)}} = \left\{ {x_1^{(i)},x_2^{(i)}, \ldots ,x_n^{(i)},x_{n + 1}^{(i)} \ldots ,x_m^{(i)}}\right\}.$
%Thus we have ${\Sigma^{(n)}}  = ( {{\widetilde{\mathbf{x}}^{(n)}},{\rho ^{(n)}},{{\widetilde B}^{(n)}}} )$.
The  cluster ${\mathbf{x}}^{(n)} = \left\{ {x_1^{(n)},x_2^{(n)}, \ldots ,x_n^{(n)}} \right\}$ is called  the generalized projective cluster, and each  cluster variable in ${\mathbf{x}}^{(n)}$ is called a generalized projective cluster variable.
\end{definition}

\begin{remark}
\begin{itemize}%\itemsep=0pt
\item[(1)]  It follows from Definition \ref{3.1} that  ${B^{(n)}}={B}$, $x_i^{(i)} = x_i^{(j)}$ for any $1\leq i\leq j\leq n$ and $x_i^{(j)}=x_i$ for any $i\in [n+1,m]$, $j\in [1,n]$;
\item[(2)] In fact, the new generalized seed ${\Sigma^{(i)}}$ is obtained by applying a sequence of mutations on $\Sigma$ corresponding to a sink sequence of the directed graph $\Gamma(\Sigma)$.
\end{itemize}
\end{remark}

\begin{definition}\label{3.2}
The lower bound generalized cluster algebra ${\mathcal{L}^{(n)}}( \Sigma)$
is defined to be generated by all generalized projective cluster variables  associated with the generalized seed $\Sigma$ over $\ZZ\mathbb{P}$, i.e.,  ${\mathcal{L}^{(n)}}(\Sigma) = \ZZ\mathbb{P}[{x_1},x_1^{(n)}, \ldots {x_n},x_n^{(n)}]$.
\end{definition}

It is obvious that ${\mathcal{L}^{(n)}}(\Sigma)$ is a finitely generated subalgebra of the generalized cluster algebra $\mathcal{A}(\Sigma)$.

\begin{proposition}\label{prop}
The algebra $\mathbb{ZP}[{x_1},x'_1, \ldots {x_n},x'_n]$ is a $\mathbb{ZP}$-subalgebra of ${\mathcal{L}^{(n)}}(\Sigma).$
\end{proposition}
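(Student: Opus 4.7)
The plan is to show $x'_i \in \mathcal{L}^{(n)}(\Sigma)$ for every $i \in [1,n]$. The case $i = 1$ is immediate: under the standing convention $b_{ij} \ge 0$ for $i > j$, vertex $1$ is a sink of $\Gamma(\Sigma)$, so $\mu_1$ produces $x_1^{(1)} = x'_1$, and the Remark following Definition~\ref{3.1} gives $x'_1 = x_1^{(n)} \in \mathcal{L}^{(n)}(\Sigma)$.

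For $i \ge 2$, the strategy is to compare the two exchange relations in which $x_i$ gets mutated: the one at position $i$ in $\Sigma$, yielding $x_i x'_i$, and the one at position $i$ in $\Sigma^{(i-1)}$, yielding $x_i x_i^{(n)}$ (since $x_i^{(i-1)} = x_i$ and $x_i^{(i)} = x_i^{(n)}$ by the same Remark). Because $\mu_1, \ldots, \mu_{i-1}$ is a sink sequence, a direct check of the mutation formula on the principal part gives $b_{ji}^{(i-1)} = -b_{ji}$ for $j < i$ and $b_{ji}^{(i-1)} = b_{ji}$ for $i < j \le n$, and in particular $i$ is a sink in $\Sigma^{(i-1)}$. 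Multiplying the identity for $x_i x_i^{(n)}$ by the monomial $\prod_{j<i} x_j^{d_i(-\beta_{ji})}$ and rewriting each $(x_j^{(n)})^{r(-\beta_{ji})}$ as $(x_j x_j^{(n)})^{r(-\beta_{ji})} x_j^{-r(-\beta_{ji})}$ aligns the $x_j$-exponents for $j < i$ with those appearing in $x_i x'_i$. Subtracting then produces the key identity
\[
x_i\Big(\prod_{j<i} x_j^{d_i(-\beta_{ji})} x_i^{(n)} - x'_i\Big) = \sum_{r=0}^{d_i} \rho_{i,r} \prod_{j<i} x_j^{(d_i-r)(-\beta_{ji})} \prod_{i<j\le n} x_j^{r\beta_{ji}} \Big[\prod_{j<i}(x_j x_j^{(n)})^{r(-\beta_{ji})} M_r^{(i-1)} - M_r\Big],
\]
where $M_r$ and $M_r^{(i-1)}$ collect the contributions of the frozen variables in the exchanges at $\Sigma$ and $\Sigma^{(i-1)}$, respectively.

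It then remains to show the right-hand side above lies in $x_i \cdot \mathcal{L}^{(n)}(\Sigma)$, which after division by $x_i$ exhibits $x'_i$ as an explicit element of $\mathcal{L}^{(n)}(\Sigma)$. Each factor $T_j := x_j x_j^{(n)}$ for $j < i$ already belongs to $\mathcal{L}^{(n)}(\Sigma)$ via its own exchange at $\Sigma^{(j-1)}$, so the bracketed expression lies in $\mathcal{L}^{(n)}(\Sigma)$; what must be extracted is an additional factor of $x_i$. For $j < i$ with $\beta_{ji} \ne 0$, skew-symmetrizability and the sink property of $j$ in $\Sigma^{(j-1)}$ force $\beta_{ij}^{(j-1)} > 0$, so $T_j - T_j|_{x_i = 0}$ lies in $x_i \cdot \mathcal{L}^{(n)}(\Sigma)$; the task therefore reduces to verifying the monomial identity $\prod_{j<i}(T_j|_{x_i=0})^{r(-\beta_{ji})}\, M_r^{(i-1)} = M_r$ in the frozen variables for each $r$. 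The main obstacle is precisely this book-keeping: unlike the principal part, the frozen rows of $\widetilde B^{(i-1)}$ accumulate additive corrections at every sink mutation, and checking that these corrections are exactly compensated by the constant-in-$x_i$ parts of the $T_j$'s requires a careful inductive tracking of the mutation formula $b'_{kl} = b_{kl} + \frac{|b_{ki}| b_{il} + b_{ki} |b_{il}|}{2}$ applied to the frozen rows along the sink sequence $\mu_1, \ldots, \mu_{i-1}$.
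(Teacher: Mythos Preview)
Your overall strategy matches the paper's: both multiply the $\Sigma^{(i-1)}$-exchange $x_i x_i^{(n)}$ by $\prod_{k<i} x_k^{-b_{ki}}$, expand each $(x_k^{(n)})^{-r\beta_{ki}}$ using the decomposition $x_k x_k^{(n)} = A^{(k)} + C_k$ (your $T_k$ and $T_k|_{x_i=0}$), and then argue that all ``$A^{(k)}$-terms'' carry an honest factor of $x_i$ while the remaining purely-frozen part recovers $x_i x'_i$ up to a frozen factor. Your packaging via the subtraction identity is clean and equivalent to the paper's direct expansion.

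However, the reduction you state is wrong as written. The monomial identity
\[
\prod_{j<i}(T_j|_{x_i=0})^{r(-\beta_{ji})}\, M_r^{(i-1)} \;=\; M_r
\]
fails in general: already at $r=0$ it would force $M_0^{(i-1)} = M_0$, i.e.\ $[-b_{li}^{(i-1)}]_+ = [-b_{li}]_+$ for every frozen $l$, which is false whenever some $b_{li} \ge 0$ has been pushed to $b_{li}^{(i-1)} < 0$ by the sink mutations. What \emph{is} true, and what the paper establishes through its case analysis (a)--(c) of the frozen rows, is that the ratio is an $r$-\emph{independent} frozen monomial
\[
P \;=\; \prod_{\substack{l>n\\ b_{li}^{(i-1)}<0}} x_l^{\,-[b_{li}]_+ + \sum_{k\in S_l} b_{ki}\, b_{lk}^{(k-1)}}\,,
\qquad S_l = \{k<i : b_{lk}^{(k-1)}<0\}.
\]
One then obtains $\prod_{k<i} x_k^{-b_{ki}}\, x_i^{(n)} = P\, x'_i + f$ with $f \in \mathcal{L}^{(n)}(\Sigma)$, and concludes by inverting $P$ in $\mathbb{ZP}$. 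In your subtraction scheme this means the bracket $[\,\prod T_j^{r(-\beta_{ji})} M_r^{(i-1)} - M_r\,]$ does \emph{not} land in $x_i\cdot\mathcal{L}^{(n)}(\Sigma)$ term-by-term; only after replacing the target $x'_i$ by $P\,x'_i$ does the argument close. So the fix is small but essential: prove that the frozen discrepancy is $r$-independent (this is exactly the ``careful inductive tracking'' you flagged), and then divide by the resulting unit. Without that correction, and without carrying out that tracking, the proposal remains an outline rather than a proof.
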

\begin{proof}
According to the exchange relation, we have
\begin{eqnarray}\label{ex1}	
x'_i &=& x_i^{ - 1}\left(\sum\limits_{r = 0}^{{d_i}} {{\rho _{i,r}}\prod\limits_{j = 1}^{i - 1} {x_j^{({d_i} - r)( - \beta _{ji}^{})}} } \prod\limits_{j = i}^n {x_j^{r\beta _{ji}^{}}} \prod\limits_{j = n + 1}^m {x_j^{\left\lfloor {{{\left[ {r\beta _{ji}^{}} \right]}_ + }} \right\rfloor  + \left\lfloor {{{\left[ {({d_i} - r)( - \beta _{ji}^{})} \right]}_ + }} \right\rfloor }}\right).
\end{eqnarray}
	
Note that the matrices ${B^{\left( {k} \right)}},k \in [1,i-1]$ look as follows:
	
\[{B^{\left( k \right)}} = \left( {\begin{array}{*{20}{c}}
		0&{{b_{1,2}}}& \cdots &{{b_{1,k - 1}}}&{{b_{1,k}}}&{ - {b_{1,k + 1}}}& \cdots &{ - {b_{1,n}}}\\
		{{b_{2,1}}}&0& \cdots &{{b_{2,k - 1}}}&{{b_{2,k}}}&{ - {b_{2,k + 1}}}& \cdots &{ - {b_{2,n}}}\\
		\vdots & \vdots & \ddots &{\vdots}&{\vdots}&{\vdots}&{\ddots}& \vdots \\
		{{b_{k - 1,1}}}&{{b_{k - 1,2}}}& \cdots &0&{{b_{k - 1,k}}}&{ - {b_{k - 1,k + 1}}}& \cdots &{ - {b_{k - 1,n}}}\\
		{{b_{k,1}}}&{{b_{k,2}}}& \cdots &{{b_{k,k - 1}}}&0&{ - {b_{k,k + 1}}}& \cdots &{ - {b_{k,n}}}\\
		{ - {b_{k + 1,1}}}&{ - {b_{k + 1,2}}}& \cdots &{ - {b_{k + 1,k - 1}}}&{ - {b_{k + 1,k}}}&0& \cdots &{{b_{k + 1,n}}}\\
		{ - {b_{k + 2,1}}}&{ - {b_{k + 2,2}}}& \cdots &{ - {b_{k + 2,k - 1}}}&{ - {b_{k + 2,k}}}&{{b_{k + 2,k + 1}}}& \cdots &{{b_{k + 2,n}}}\\
		\vdots & \vdots &{\ddots}&{\vdots}&{\vdots}&{\vdots}& \ddots & \vdots \\
		{ - {b_{n,1}}}&{ - {b_{n,1}}}& \cdots &{ - {b_{n,k - 1}}}&{ - {b_{n,k}}}&{{b_{n,k + 1}}}& \cdots &0
\end{array}} \right).\]
	
According to the exchange relation, we have
\begin{eqnarray*}
	x_k^{(k)}& = &x_k^{ - 1}\Big(\sum\limits_{r = 0}^{{d_k}} {{\rho _{k,r}}} \prod\limits_{j = 1}^m {\left( {x_j^{(k - 1)}} \right)_{}^{\left\lfloor {{{\left[ {r\beta _{jk}^{(k - 1)}} \right]}_ + }} \right\rfloor  + \left\lfloor {{{\left[ {({d_k} - r)( - \beta _{jk}^{(k - 1)})} \right]}_ + }} \right\rfloor }} \Big)\\
	&= &x_k^{ - 1}\Big(\sum\limits_{r = 0}^{{d_k}} {{\rho _{k,r}}\prod\limits_{j = 1}^{k - 1} {{{\left( {x_j^{(k - 1)}} \right)}^{\left\lfloor {{{\left[ {r\beta _{jk}^{(k - 1)}} \right]}_ + }} \right\rfloor  + \left\lfloor {{{\left[ {({d_k} - r)( - \beta _{jk}^{(k - 1)})} \right]}_ + }} \right\rfloor }}} } \\
	&&\cdot \prod\limits_{j = k}^m {x_j^{\left\lfloor {{{\left[ {r\beta _{jk}^{(k - 1)}} \right]}_ + }} \right\rfloor  + \left\lfloor {{{\left[ {({d_k} - r)( - \beta _{jk}^{(k - 1)})} \right]}_ + }} \right\rfloor }} \Big)\\
	&=& x_k^{ - 1}\Big(\sum\limits_{r = 0}^{{d_k}} {{\rho _{k,r}}\prod\limits_{j = k}^n {x_j^{r{\beta _{jk}}}} \prod\limits_{j = 1}^{k - 1} {{{\left( {x_j^{(j)}} \right)}^{ - r{\beta _{jk}}}}} } \prod\limits_{j = n + 1}^m {x_j^{\left\lfloor {{{\left[ {r\beta _{jk}^{(k - 1)}} \right]}_ + }} \right\rfloor  + \left\lfloor {{{\left[ {({d_k} - r)( - \beta _{jk}^{(k - 1)})} \right]}_ + }} \right\rfloor }} \Big)\\
	&=& x_k^{ - 1}\Big(\sum\limits_{r = 0}^{{d_k}} {{\rho _{k,r}}\prod\limits_{j = k}^n {x_j^{r{\beta _{jk}}}} \prod\limits_{j = 1}^{k - 1} {{{\left( {x_j^{(j)}} \right)}^{ - r{\beta _{jk}}}}} } \prod\limits_{\mathop {j{\rm{ }} = {\rm{ }}n{\rm{ }} + {\rm{ }}1}\limits_{b_{jk}^{(k - 1)} \ge 0} }^m {x_j^{\left\lfloor {{{\left[ {r\beta _{jk}^{(k - 1)}} \right]}_ + }} \right\rfloor }} \\
	&&\cdot \prod\limits_{\mathop {j{\rm{ }} = {\rm{ }}n{\rm{ }} + {\rm{ }}1}\limits_{b_{jk}^{(k - 1)} < 0} }^m {x_j^{\left\lfloor {{{\left[ {({d_k} - r)( - \beta _{jk}^{(k - 1)})} \right]}_ + }} \right\rfloor }} \Big)\\
	&=& x_k^{ - 1}\Big(\sum\limits_{r = 1}^{{d_k}} {{\rho _{k,r}}\prod\limits_{j = k}^n {x_j^{r{\beta _{jk}}}} \prod\limits_{j = 1}^{k - 1} {{{\left( {x_j^{(j)}} \right)}^{ - r{\beta _{jk}}}}} } \prod\limits_{\mathop {j{\rm{ }} = {\rm{ }}n{\rm{ }} + {\rm{ }}1}\limits_{b_{jk}^{(k - 1)} \ge 0} }^m {x_j^{\left\lfloor {{{\left[ {r\beta _{jk}^{(k - 1)}} \right]}_ + }} \right\rfloor }} \\
	&&\cdot \prod\limits_{\mathop {j{\rm{ }} = {\rm{ }}n{\rm{ }} + {\rm{ }}1}\limits_{b_{jk}^{(k - 1)} < 0} }^m {x_j^{\left\lfloor {{{\left[ {({d_k} - r)( - \beta _{jk}^{(k - 1)})} \right]}_ + }} \right\rfloor }}  + \prod\limits_{\mathop {j{\rm{ }} = {\rm{ }}n{\rm{ }} + {\rm{ }}1}\limits_{b_{jk}^{(k - 1)} < 0} }^m {x_j^{ - b_{jk}^{(k - 1)}}} \Big).
\end{eqnarray*}	
We rewrite $x_{k}^{(k)}$ as
\begin{equation}\label{(xpk)}
x_k^{(k)} = x_k^{ - 1}\Big({A^{(k)}} + \prod\limits_{\mathop {j{\rm{ }} = {\rm{ }}n{\rm{ }} + {\rm{ }}1}\limits_{b_{jk}^{(k - 1)} < 0} }^m {x_j^{ - b_{jk}^{(k - 1)}}} \Big),
\end{equation}
where
\begin{eqnarray}\label{xin1}
		{A^{(k)}} &= &\sum\limits_{r = 1}^{{d_k}} {{\rho _{k,r}}\Big(\prod\limits_{j = k}^n {x_j^{r{\beta _{jk}}}} \prod\limits_{j = 1}^{k - 1} {{{\left( {x_j^{(j)}} \right)}^{ - r{\beta _{jk}}}}} } \prod\limits_{\mathop {j{\rm{ }} = {\rm{ }}n{\rm{ }} + {\rm{ }}1}\limits_{b_{jk}^{(k - 1)} \ge 0} }^m {x_j^{\left\lfloor {{{\left[ {r\beta _{jk}^{(k - 1)}} \right]}_ + }} \right\rfloor }} \nonumber\\
		&&\cdot \prod\limits_{\mathop {j{\rm{ }} = {\rm{ }}n{\rm{ }} + {\rm{ }}1}\limits_{b_{jk}^{(k - 1)} < 0} }^m {x_j^{\left\lfloor {{{\left[ {({d_k} - r)( - \beta _{jk}^{(k - 1)})} \right]}_ + }} \right\rfloor }} \Big).
\end{eqnarray}
Note that $\beta _{ki}^{(k - 1)} = \beta _{ki}^{} \le 0\  (k \in \left[ {1,i - 1} \right]).$  Thus, for any $ {j \in \left[ {n + 1,m} \right]},$ we have
\begin{eqnarray}\label{(3.6)}	
\beta _{ji}^{(i - 1)} &=& {\beta _{ji}} + \sum\limits_{k = 1}^{i - 1} {\frac{{\left| {b_{jk}^{(k - 1)}} \right|\beta _{ki}^{(k - 1)} + b_{jk}^{(k - 1)}\left| {\beta _{ki}^{(k - 1)}} \right|}}{2}}= {\beta _{ji}} - \sum\limits_{{S}}^{} {  b_{jk}^{(k - 1)}\beta _{ki}},
\end{eqnarray}
where ${S}:=\left\{ {k\left| {b_{jk}^{(k - 1)} < 0,k \in \left[ {1,i - 1} \right]} \right.} \right\}.$

Now we consider ${\Sigma^{(i)}} = {\mu _{i-1}} ( {{\widetilde{\mathbf{x}}^{(i-1)}},{\rho ^{(i-1)}},{{\widetilde B}^{(i-1)}}} )=( {{\widetilde{\mathbf{x}}^{(i)}},{\rho ^{(i)}},{{\widetilde B}^{(i)}}} )$, and  thus
\begin{eqnarray*}
x_i^{(i)} &=&x_i^{ - 1}\Big(\sum\limits_{r = 0}^{{d_i}} {{\rho _{i,r}}\prod\limits_{j = i}^n {x_j^{r{\beta _{ji}}}} \prod\limits_{j = 1}^{i - 1} {{{\left( {x_j^{(j)}} \right)}^{ - r{\beta _{ji}}}}} \prod\limits_{j = n + 1}^m {x_j^{\left\lfloor {{{\left[ {r\beta _{ji}^{(i - 1)}} \right]}_ + }} \right\rfloor  + \left\lfloor {{{\left[ {({d_i} - r)( - \beta _{ji}^{(i - 1)})} \right]}_ + }} \right\rfloor }} } \Big).
\end{eqnarray*}

We have the following two cases.
\begin{itemize}[leftmargin=*]%\itemsep=0pt
\item[(1)]  When $b_{ki}=0$ for any $k \in [1,i-1]$, we obtain
\[x_i^{(i)} = x_i^{ - 1}\Big(\sum\limits_{r = 0}^{{d_i}} {{\rho _{i,r}}\prod\limits_{j = i}^n {x_j^{r{\beta _{ji}}}} \prod\limits_{j = n + 1}^m {x_j^{\left\lfloor {{{\left[ {r\beta _{ji}^{(i - 1)}} \right]}_ + }} \right\rfloor  + \left\lfloor {{{\left[ {({d_i} - r)( - \beta _{ji}^{(i - 1)})} \right]}_ + }} \right\rfloor }} } \Big),\]
and  Equation~(\ref{(3.6)}) becomes  $\beta _{ji}^{(i - 1)} = {\beta _{ji}}.$
Thus, by combining Equation~(\ref{ex1}), we have
\[x_i^{(i)} = x_i^{ - 1}\Big(\sum\limits_{r = 0}^{{d_i}} {{\rho _{i,r}}\prod\limits_{j = i}^n {x_j^{r{\beta _{ji}}}} \prod\limits_{j = n + 1}^m {x_j^{\left\lfloor {{{\left[ {r\beta _{ji}^{}} \right]}_ + }} \right\rfloor  + \left\lfloor {{{\left[ {({d_i} - r)( - \beta _{ji}^{})} \right]}_ + }} \right\rfloor }} } \Big)= {x'_i}.\]
Therefore $${x'_i} \in \mathbb{ZP}[{x_1},x_1^{(n)}, \ldots {x_n},x_n^{(n)}].$$

\item[(2)] When $b_{ki}\ne0$  for some $k \in [1,i - 1]$. According to  Equation~(\ref{(xpk)}), we  have
\begin{eqnarray*}
	x_i^{(i)} &=& x_i^{ - 1}\Big(\sum\limits_{r = 0}^{{d_i}} {{\rho _{i,r}}\prod\limits_{j = i}^n {x_j^{r{\beta _{ji}}}} \prod\limits_{k = 1}^{i - 1} {{{(x_{k}^{ - 1}{\rm{(}}{A^{(k)}} + \prod\limits_{\mathop {j = n + 1}\limits_{b_{jk }^{(k - 1)} < 0} }^m {x_j^{ - b_{jk}^{(k - 1)}}} ))}^{- r{\beta _{ki}}}}} } \\
	&&\cdot\prod\limits_{j = n + 1}^m {x_j^{\left\lfloor {{{\left[ {r\beta _{ji}^{(i - 1)}} \right]}_ + }} \right\rfloor  + \left\lfloor {{{\left[ {({d_i} - r)( - \beta _{ji}^{(i - 1)})} \right]}_ + }} \right\rfloor }} \Big)\\
	&= &x_i^{ - 1}\Big(\sum\limits_{r = 0}^{{d_i}} {{\rho _{i,r}}\prod\limits_{j = i}^n {x_j^{r{\beta _{ji}}}} \prod\limits_{k = 1}^{i - 1} {x_k^{r{\beta _{ki}}}} \prod\limits_{k = 1}^{i - 1} {{{{\rm{(}}{A^{(k)}} + \prod\limits_{\mathop {j = n + 1}\limits_{b_{jk}^{(k - 1)} < 0} }^m {x_j^{ - b_{jk}^{(k - 1)}}} )}^{ - r{\beta _{ki}}}}} } \\
	&&\cdot\prod\limits_{j = n + 1}^m {x_j^{\left\lfloor {{{\left[ {r\beta _{ji}^{(i - 1)}} \right]}_ + }} \right\rfloor  + \left\lfloor {{{\left[ {({d_i} - r)( - \beta _{ji}^{(i - 1)})} \right]}_ + }} \right\rfloor }} \Big).
\end{eqnarray*}

By multiplying both sides  by {\small$\prod\limits_{k = 1}^{i - 1} {x_k^{ - {b_{ki}}}} $}, we have
\begin{eqnarray*}
	\prod\limits_{k = 1}^{i - 1} {x_k^{ - {b_{ki}}}} x_i^{(i)} &=& x_i^{ - 1}\Big(\sum\limits_{r = 0}^{{d_i}} {{\rho _{i,r}}\prod\limits_{j = i}^n {x_j^{r{\beta _{ji}}}} \prod\limits_{k = 1}^{i - 1} {x_k^{ - {b_{ki}} + r{\beta _{ki}}}} \prod\limits_{k = 1}^{i - 1} {{{{\rm{(}}{A^{(k)}} + \prod\limits_{\mathop {j = n + 1}\limits_{b_{jk}^{(k - 1)} < 0} }^m {x_j^{ - b_{jk}^{(k - 1)}}} )}^{ - r{\beta _{ki}}}}} } \\
	&&\cdot\prod\limits_{j = n + 1}^m {x_j^{\left\lfloor {{{\left[ {r\beta _{ji}^{(i - 1)}} \right]}_ + }} \right\rfloor  + \left\lfloor {{{\left[ {({d_i} - r)( - \beta _{ji}^{(i - 1)})} \right]}_ + }} \right\rfloor }} \Big)\\
	&= &x_i^{ - 1}\Big(\sum\limits_{r = 0}^{{d_i}} {{\rho _{i,r}}\prod\limits_{j = i}^n {x_j^{r{\beta _{ji}}}} \prod\limits_{j = 1}^{i - 1} {x_j^{(r - {d_i}){\beta _{ji}}}} \prod\limits_{k = 1}^{i - 1} {\prod\limits_{\mathop {j = n + 1}\limits_{b_{jk}^{(k - 1)} < 0} }^m {x_j^{r{\beta _{ki}}b_{jk}^{(k - 1)}}} } } \\
	&&\cdot\prod\limits_{j = n + 1}^m {x_j^{\left\lfloor {{{\left[ {r\beta _{ji}^{(i - 1)}} \right]}_ + }} \right\rfloor  + \left\lfloor {{{\left[ {({d_i} - r)( - \beta _{ji}^{(i - 1)})} \right]}_ + }} \right\rfloor }} \Big) + f\\
		&=& x_i^{ - 1}\Big(\sum\limits_{r = 0}^{{d_i}} {{\rho _{i,r}}\prod\limits_{j = i}^n {x_j^{r{\beta _{ji}}}} \prod\limits_{j = 1}^{i - 1} {x_j^{(r - {d_i}){\beta _{ji}}}} \prod\limits_{j = n + 1}^m {\prod\limits_{\mathop {k = 1}\limits_{b_{jk}^{(k - 1)} < 0} }^{i - 1} {x_j^{r{\beta _{ki}}b_{jk}^{(k - 1)}}} } } \\
		&&\cdot \prod\limits_{j = n + 1}^m {x_j^{\left\lfloor {{{\left[ {r\beta _{ji}^{(i - 1)}} \right]}_ + }} \right\rfloor  + \left\lfloor {{{\left[ {({d_i} - r)( - \beta _{ji}^{(i - 1)})} \right]}_ + }} \right\rfloor }} \Big) + f,
\end{eqnarray*}
where
\begin{eqnarray}\label{xin2}
		f &=& x_i^{ - 1}\Big(\sum\limits_{r = 0}^{{d_i}} {{\rho _{i,r}}\prod\limits_{j = i}^n {x_j^{r{\beta _{ji}}}} \prod\limits_{j = 1}^{i - 1} {x_j^{(r - {d_i}){\beta _{ji}}}} \prod\limits_{j = n + 1}^m {x_j^{\left\lfloor {{{\left[ {r\beta _{ji}^{(i - 1)}} \right]}_ + }} \right\rfloor  + \left\lfloor {{{\left[ {({d_i} - r)( - \beta _{ji}^{(i - 1)})} \right]}_ + }} \right\rfloor }} }\nonumber \\
	&& \cdot \prod\limits_{k = 1}^{i - 1} {\sum\limits_{t = 1}^{ - r{\beta _{ki}}} {(C_{ - r{\beta _{ki}}}^t{{({A^{(k)}})}^t}\prod\limits_{\mathop {j{\rm{ }} = {\rm{ }}n{\rm{ }} + {\rm{ }}1}\limits_{b_{{j_k}}^{(k - 1)} < 0} }^m {x_j^{b_{{j_k}}^{(k - 1)}(r{\beta _{ki}} + t)}} } } )\Big),
\end{eqnarray}
with $C_s^l: = \left( {\begin{array}{*{20}{c}}
		s\\
		l
\end{array}} \right).$

We first claim that
\begin{eqnarray}\label{include}
f \in \mathbb{ZP}\left[ {{x_1}, \ldots ,{x_n},x_1^{(1)}, \ldots ,x_{i - 1}^{(i - 1)}} \right].
\end{eqnarray}
Note that we can rewrite Equation~(\ref{xin1}) as follows:
\begin{eqnarray}\label{xin3}
	{A^{(k)}} = \sum\limits_{r = 1}^{{d_k}} {{P_{k,r}}\prod\limits_{j = k}^n {x_j^{r{\beta _{jk}}}} \prod\limits_{j = 1}^{k - 1} {{{\left( {x_j^{(j)}} \right)}^{ - r{\beta _{jk}}}}} },
\end{eqnarray}
where ${P_{k,r}} = {\rho _{k,r}}\prod\limits_{\mathop {j{\rm{ }} = {\rm{ }}n{\rm{ }} + {\rm{ }}1}\limits_{b_{jk}^{(k - 1)} \ge 0} }^m {x_j^{\left\lfloor {{{\left[ {r\beta _{jk}^{(k - 1)}} \right]}_ + }} \right\rfloor }} \prod\limits_{\mathop {j{\rm{ }} = {\rm{ }}n{\rm{ }} + {\rm{ }}1}\limits_{b_{jk}^{(k - 1)} < 0} }^m {x_j^{\left\lfloor {{{\left[ {({d_k} - r)( - \beta _{jk}^{(k - 1)})} \right]}_ + }} \right\rfloor }}.$

Thus we have
\[x_i^{ - 1}{A^{(k)}} = \sum\limits_{r = 1}^{{d_k}} {{P_{k,r}}x_i^{r{\beta _{ik}} - 1}\prod\limits_{j = k,j \ne i}^n {x_j^{r{\beta _{jk}}}} \prod\limits_{j = 1}^{k - 1} {{{\left( {x_j^{(j)}} \right)}^{ - r{\beta _{jk}}}}} }. \]

Note that, for any $r\in[1,d_k]$, we have $r{\beta _{i,k}} -1 \ge 0.$ So we obtain $x_i^{ - 1}{A^{(k)}} \in \mathbb{ZP}\left[ {x_1^{(1)}, \ldots ,x_{k - 1}^{(k - 1)}},x_{k+1},\ldots ,{x_n} \right]$.

Then, by Equation~(\ref{xin2}), we get $f \in \mathbb{ZP}\left[ {{x_1}, \ldots ,{x_n},x_1^{(1)}, \ldots ,x_{i - 1}^{(i - 1)}} \right]$.

Now we continue to compute
\begin{eqnarray*}
%	&&\prod\limits_{k = 1}^{i - 1} {x_k^{ - {b_{ki}}}} x_i^{(i)}\\
\prod\limits_{k = 1}^{i - 1} {x_k^{ - {b_{ki}}}} x_i^{(i)}	&=& x_i^{ - 1}\Big(\sum\limits_{r = 0}^{{d_i}} {{\rho _{i,r}}\prod\limits_{j = i}^n {x_j^{r{\beta _{ji}}}} \prod\limits_{j = 1}^{i - 1} {x_j^{(r - {d_i}){\beta _{ji}}}} \prod\limits_{j = n + 1}^m {\prod\limits_S^{} {x_j^{r{\beta _{ki}}b_{jk}^{(k - 1)}}} } } \prod\limits_{\mathop {j{\rm{ }} = {\rm{ }}n{\rm{ }} + {\rm{ }}1}\limits_{b_{ji}^{(i - 1)} \ge 0} }^m {x_j^{\left\lfloor {{{\left[ {r\beta _{ji}^{(i - 1)}} \right]}_ + }} \right\rfloor }} \\
	&&\cdot \prod\limits_{\mathop {j{\rm{ }} = {\rm{ }}n{\rm{ }} + {\rm{ }}1}\limits_{b_{ji}^{(i - 1)} < 0,{b_{ji}} \ge 0} }^m {x_j^{\left\lfloor {{{\left[ {({d_i} - r)( - \beta _{ji}^{(i - 1)})} \right]}_ + }} \right\rfloor }} \prod\limits_{\mathop {j{\rm{ }} = {\rm{ }}n{\rm{ }} + {\rm{ }}1}\limits_{b_{ji}^{(i - 1)} < 0,{b_{ji}} < 0} }^m {x_j^{\left\lfloor {{{\left[ {({d_i} - r)( - \beta _{ji}^{(i - 1)})} \right]}_ + }} \right\rfloor }} \Big) + f\\
	&=& x_i^{ - 1}\Big(\sum\limits_{r = 0}^{{d_i}} {{\rho _{i,r}}\prod\limits_{j = i}^n {x_j^{r{\beta _{ji}}}} \prod\limits_{j = 1}^{i - 1} {x_j^{(r - {d_i}){\beta _{ji}}}} \prod\limits_{\mathop {j{\rm{ }} = {\rm{ }}n{\rm{ }} + {\rm{ }}1}\limits_{b_{ji}^{(i - 1)} \ge 0} }^m {\prod\limits_S^{} {x_j^{r{\beta _{ki}}b_{jk}^{(k - 1)}}} } } \\
	&&\cdot \prod\limits_{\mathop {j{\rm{ }} = {\rm{ }}n{\rm{ }} + {\rm{ }}1}\limits_{b_{ji}^{(i - 1)} < 0,{b_{ji}} \ge 0} }^m {\prod\limits_S^{} {x_j^{r{\beta _{ki}}b_{jk}^{(k - 1)}}} } \prod\limits_{\mathop {j{\rm{ }} = {\rm{ }}n{\rm{ }} + {\rm{ }}1}\limits_{b_{ji}^{(i - 1)} < 0,{b_{ji}} < 0} }^m {\prod\limits_S^{} {x_j^{r{\beta _{ki}}b_{jk}^{(k - 1)}}} } \prod\limits_{\mathop {j{\rm{ }} = {\rm{ }}n{\rm{ }} + {\rm{ }}1}\limits_{b_{ji}^{(i - 1)} \ge 0} }^m {x_j^{\left\lfloor {{{\left[ {r\beta _{ji}^{(i - 1)}} \right]}_ + }} \right\rfloor }} \\
	&&\cdot \prod\limits_{\mathop {j{\rm{ }} = {\rm{ }}n{\rm{ }} + {\rm{ }}1}\limits_{b_{ji}^{(i - 1)} < 0,{b_{ji}} \ge 0} }^m {x_j^{\left\lfloor {{{\left[ {({d_i} - r)( - \beta _{ji}^{(i - 1)})} \right]}_ + }} \right\rfloor }} \prod\limits_{\mathop {j{\rm{ }} = {\rm{ }}n{\rm{ }} + {\rm{ }}1}\limits_{b_{ji}^{(i - 1)} < 0,{b_{ji}} < 0} }^m {x_j^{\left\lfloor {{{\left[ {({d_i} - r)( - \beta _{ji}^{(i - 1)})} \right]}_ + }} \right\rfloor }} \Big) + f.
\end{eqnarray*}

We rewrite the above equation as follows:
\begin{eqnarray}\label{(3.8)}
%	&&\prod\limits_{k = 1}^{i - 1} {x_k^{ - {b_{ki}}}} x_i^{(i)}\nonumber\\
\prod\limits_{k = 1}^{i - 1} {x_k^{ - {b_{ki}}}} x_i^{(i)}	&= &x_i^{ - 1}\Big(\sum\limits_{r = 0}^{{d_i}} {{\rho _{i,r}}\prod\limits_{j = i}^n {x_j^{r{\beta _{ji}}}} \prod\limits_{j = 1}^{i - 1} {x_j^{(r - {d_i}){\beta _{ji}}}} \prod\limits_{\mathop {j{\rm{ }} = {\rm{ }}n{\rm{ }} + {\rm{ }}1}\limits_{b_{ji}^{(i - 1)} \ge 0} }^m {\prod\limits_S^{} {x_j^{r{\beta _{ki}}b_{jk}^{(k - 1)}}} } } \prod\limits_{\mathop {j{\rm{ }} = {\rm{ }}n{\rm{ }} + {\rm{ }}1}\limits_{b_{ji}^{(i - 1)} \ge 0} }^m {x_j^{\left\lfloor {{{\left[ {r\beta _{ji}^{(i - 1)}} \right]}_ + }} \right\rfloor }}\nonumber \\
	&&\cdot \prod\limits_{\mathop {j{\rm{ }} = {\rm{ }}n{\rm{ }} + {\rm{ }}1}\limits_{b_{ji}^{(i - 1)} < 0,{b_{ji}} \ge 0} }^m {\prod\limits_S^{} {x_j^{r{\beta _{ki}}b_{jk}^{(k - 1)}}} } \prod\limits_{\mathop {j{\rm{ }} = {\rm{ }}n{\rm{ }} + {\rm{ }}1}\limits_{b_{ji}^{(i - 1)} < 0,{b_{ji}} \ge 0} }^m {x_j^{\left\lfloor {{{\left[ {({d_i} - r)( - \beta _{ji}^{(i - 1)})} \right]}_ + }} \right\rfloor }} \nonumber\\
	&&\cdot \prod\limits_{\mathop {j{\rm{ }} = {\rm{ }}n{\rm{ }} + {\rm{ }}1}\limits_{b_{ji}^{(i - 1)} < 0,{b_{ji}} < 0} }^m {\prod\limits_S^{} {x_j^{r{\beta _{ki}}b_{jk}^{(k - 1)}}} } \prod\limits_{\mathop {j{\rm{ }} = {\rm{ }}n{\rm{ }} + {\rm{ }}1}\limits_{b_{ji}^{(i - 1)} < 0,{b_{ji}} < 0} }^m {x_j^{\left\lfloor {{{\left[ {({d_i} - r)( - \beta _{ji}^{(i - 1)})} \right]}_ + }} \right\rfloor }} \Big) + f.
\end{eqnarray}

For each $ j \in[n+1,m] $, we need the following discussions.
%\noindent
\begin{itemize}[leftmargin=*]
\item[(a)] If $b _{ji}^{(i - 1)} \ge 0$, we have
%\begin{eqnarray*}
%	x_j^{\left\lfloor {r\beta _{ji}^{(i - 1)}} \right\rfloor }\prod\limits_S^{} {x_j^{r{\beta _{ki}}b_{jk}^{(k - 1)}}} &\mathop = \limits^{(\ref{(3.6)})}& x_j^{\left\lfloor {r{\beta _{ji}} - \sum\limits_S^{} {rb_{jk}^{(k - 1)}\beta _{ki}^{(k - 1)}} } \right\rfloor }\prod\limits_S^{} {x_j^{r{\beta _{ki}}b_{jk}^{(k - 1)}}} \\
%	&=& x_j^{\left\lfloor {r{\beta _{ji}} - \sum\limits_S^{} {rb_{jk}^{(k - 1)}\beta _{ki}^{}} } \right\rfloor  + \sum\limits_S^{} {r{\beta _{ki}}b_{jk}^{(k - 1)}} }\\
%	&=& x_j^{\left\lfloor {r{\beta _{ji}}} \right\rfloor }.
%\end{eqnarray*}
\begin{eqnarray*}
	& &x_j^{\left\lfloor {r\beta _{ji}^{(i - 1)}} \right\rfloor }\prod\limits_S^{} {x_j^{r{\beta _{ki}}b_{jk}^{(k - 1)}}} \mathop = \limits^{(\ref{(3.6)})} x_j^{\left\lfloor {r{\beta _{ji}} - \sum\limits_S^{} {rb_{jk}^{(k - 1)}\beta _{ki}^{(k - 1)}} } \right\rfloor }\prod\limits_S^{} {x_j^{r{\beta _{ki}}b_{jk}^{(k - 1)}}} \\
	&= &x_j^{\left\lfloor {r{\beta _{ji}} - \sum\limits_S^{} {rb_{jk}^{(k - 1)}\beta _{ki}^{}} } \right\rfloor  + \sum\limits_S^{} {r{\beta _{ki}}b_{jk}^{(k - 1)}} }= x_j^{\left\lfloor {r{\beta _{ji}}} \right\rfloor }.
\end{eqnarray*}

According to Equation~(\ref{(3.6)}), if $\beta _{ji}^{(i - 1)} \ge 0,$ then  ${\beta _{ji}} = \beta _{ji}^{(i - 1)} + \sum\limits_S {b_{jk}^{(k - 1)}} {\beta _{ki}} \ge 0,$
which implies that if $b_{ji}^{(i - 1)} \ge 0,$ then  ${b _{ji}} \ge 0$.
Therefore, we have
$$\prod\limits_{\mathop {j = n + 1}\limits_{b_{ji}^{(i - 1)} \ge 0} }^m {\prod\limits_S^{} {x_j^{r{\beta _{ki}}b_{jk}^{(k - 1)}}} } \prod\limits_{\mathop {j = n + 1}\limits_{b_{ji}^{(i - 1)} \ge 0} }^m {x_j^{\left\lfloor {{{\left[ {r\beta _{ji}^{(i - 1)}} \right]}_ + }} \right\rfloor }} = \prod\limits_{\mathop {j = n + 1}\limits_{b_{ji}^{(i - 1)} \ge 0} }^m {x_j^{\left\lfloor {r{\beta _{ji}}} \right\rfloor }}  = \prod\limits_{\mathop {j = n + 1}\limits_{b_{ji}^{(i - 1)} \ge 0,{b_{ji}} \ge 0} }^m {x_j^{\left\lfloor {r{\beta _{ji}}} \right\rfloor }} .$$

\item[(b)]  If   $b _{ji}^{(i - 1)} < 0$ and ${b _{ji}} \ge 0$, we have
\begin{eqnarray*}
	&&	x_j^{\left\lfloor {({d_i} - r)( - \beta _{ji}^{(i - 1)})} \right\rfloor }\prod\limits_{{S}}^{} {x_j^{r{\beta _{ki}}b_{jk}^{(k - 1)}}} \mathop = \limits^{(\ref{(3.6)})}  x_j^{\left\lfloor {({d_i} - r)( - {\beta _{ji}} + \sum\limits_{{S}}^{} {b_{jk}^{(k - 1)}\beta _{ki}^{(k - 1)}} )} \right\rfloor }\prod\limits_{{S}}^{} {x_j^{r{\beta _{ki}}b_{jk}^{(k - 1)}}} \\
		&=& x_j^{\left\lfloor {({d_i} - r)( - {\beta _{ji}} + \sum\limits_{{S}}^{} {b_{jk}^{(k - 1)}\beta _{ki}^{(k - 1)}} )} \right\rfloor  + \sum\limits_{{S}}^{} {r\beta _{ki}^{(k - 1)}b_{jk}^{(k - 1)}} }
		= x_j^{\left\lfloor {r{\beta _{ji}}} \right\rfloor  - {b_{ji}} + \sum\limits_{{S}}^{} {b _{ki}^{(k - 1)}b_{jk}^{(k - 1)}} }.
\end{eqnarray*}
Thus we obtain
\begin{eqnarray*}
	&&\prod\limits_{\mathop {j{\rm{ }} = {\rm{ }}n{\rm{ }} + {\rm{ }}1}\limits_{b_{ji}^{(i - 1)} < 0,{b_{ji}} \ge 0} }^m {\prod\limits_S^{} {x_j^{r{\beta _{ki}}b_{jk}^{(k - 1)}}} } \prod\limits_{\mathop {j{\rm{ }} = {\rm{ }}n{\rm{ }} + {\rm{ }}1}\limits_{b_{ji}^{(i - 1)} < 0,{b_{ji}} \ge 0} }^m {x_j^{\left\lfloor {{{\left[ {({d_i} - r)( - \beta _{ji}^{(i - 1)})} \right]}_ + }} \right\rfloor }}  \\
	&=& \prod\limits_{\mathop {j = n + 1}\limits_{b _{ji}^{(i - 1)} < 0,{b _{ji}} \ge 0} }^m {x_j^{\left\lfloor {r{\beta _{ji}}} \right\rfloor  - {b_{ji}} + \sum\limits_{{S}}^{} {b _{ki}^{(k - 1)}b_{jk}^{(k - 1)}} }}
 = {\rm{ }}\prod\limits_{\mathop {j{\rm{ }} = {\rm{ }}n{\rm{ }} + {\rm{ }}1}\limits_{b_{ji}^{(i - 1)} < 0,{b_{ji}} \ge 0} }^m ({x_j^{\left\lfloor {r{\beta _{ji}}} \right\rfloor  - {b_{ji}}}} \prod\limits_{\mathop {k{\rm{ }} = {\rm{ }}1}\limits_{b_{jk}^{(k - 1)} < 0} }^{i - 1} {x_j^{b_{ki}^{(k - 1)}b_{jk}^{(k - 1)}}}) .
\end{eqnarray*}

\item[(c)]  If   $b _{ji}^{(i - 1)} < 0$ and ${b _{ji}} < 0$, we have
\begin{eqnarray*}
& &	x_j^{\left\lfloor {({d_i} - r)( - \beta _{ji}^{(i - 1)})} \right\rfloor }\prod\limits_{{S}}^{} {x_j^{r{\beta _{ki}}b_{jk}^{(k - 1)}}}  = x_j^{\left\lfloor {({d_i} - r)( - {\beta _{ji}} + \sum\limits_{{S}}^{} {b_{jk}^{(k - 1)}\beta _{ki}^{(k - 1)}} )} \right\rfloor }\prod\limits_{{S}}^{} {x_j^{r{\beta _{ki}}b_{jk}^{(k - 1)}}} \\
	&=&x_j^{\left\lfloor {({d_i} - r)( - {\beta _{ji}} + \sum\limits_{{S}}^{} {b_{jk}^{(k - 1)}\beta _{ki}^{(k - 1)}} )} \right\rfloor  + \sum\limits_{{S}}^{} {r\beta _{ki}^{(k - 1)}b_{jk}^{(k - 1)}} }
	= x_j^{\left\lfloor {({d_i} - r)( - {\beta _{ji}})} \right\rfloor  + \sum\limits_{{S}}^{} {b_{jk}^{(k - 1)}b_{ki}^{(k - 1)}} }.
\end{eqnarray*}
Thus we obtain
\begin{eqnarray*}
&&	\prod\limits_{\mathop {j{\rm{ }} = {\rm{ }}n{\rm{ }} + {\rm{ }}1}\limits_{b_{ji}^{(i - 1)} < 0,{b_{ji}} < 0} }^m {\prod\limits_S^S {x_j^{r{\beta _{ki}}b_{jk}^{(k - 1)}}} } \prod\limits_{\mathop {j{\rm{ }} = {\rm{ }}n{\rm{ }} + {\rm{ }}1}\limits_{b_{ji}^{(i - 1)} < 0,{b_{ji}} < 0} }^m {x_j^{\left\lfloor {{{\left[ {({d_i} - r)( - \beta _{ji}^{(i - 1)})} \right]}_ + }} \right\rfloor }} \\
	&= &\prod\limits_{\mathop {j = n + 1}\limits_{b _{ji}^{(i - 1)} < 0,{b_{ji}} < 0} }^m {x_j^{\left\lfloor {({d_i} - r)( - {\beta _{ji}})} \right\rfloor  + \sum\limits_{{S}}^{} {b_{jk}^{(k - 1)}b_{ki}^{(k - 1)}} }}
	= \prod\limits_{\mathop {j = n + 1}\limits_{b _{ji}^{(i - 1)} < 0,{b_{ji}} < 0} }^m \big({x_j^{\left\lfloor {({d_i} - r) ( - {\beta _{ji}})} \right\rfloor }} \prod\limits_{\mathop {k = 1}\limits_{b_{jk}^{(k - 1)} < 0} }^{i - 1} {x_j^{b_{jk}^{(k - 1)}b_{ki}^{(k - 1)}}}\big) .
\end{eqnarray*}
\end{itemize}

By substituting the results of (a), (b) and (c) into Equation~(\ref{(3.8)}), we obtain
\begin{eqnarray*}
	\prod\limits_{k = 1}^{i - 1} {x_k^{ - {b_{ki}}}} x_i^{(i)}& = &x_i^{ - 1}\Big(\sum\limits_{r = 0}^{{d_i}} {{\rho _{i,r}}\prod\limits_{j = i}^n {x_j^{r{\beta _{ji}}}} \prod\limits_{j = 1}^{i - 1} {x_j^{(r - {d_i}){\beta _{ji}}}} } \\
    && \cdot \prod\limits_{\mathop {j{\rm{ }} = {\rm{ }}n{\rm{ }} + {\rm{ }}1}\limits_{b_{ji}^{(i - 1)} \ge 0,{b_{ji}} \ge 0} }^m {x_j^{\left\lfloor {r{\beta _{ji}}} \right\rfloor }} \prod\limits_{\mathop {j{\rm{ }} = {\rm{ }}n{\rm{ }} + {\rm{ }}1}\limits_{b_{ji}^{(i - 1)} < 0,{b_{ji}} \ge 0} }^m {(x_j^{\left\lfloor {r{\beta _{ji}}} \right\rfloor  - {b_{ji}}}\prod\limits_{\mathop {k{\rm{ }} = {\rm{ }}1}\limits_{b_{jk}^{(k - 1)} < 0} }^{i - 1} {x_j^{b_{ki}^{(k - 1)}b_{jk}^{(k - 1)}})} } \\
	&&\cdot \prod\limits_{\mathop {j = n + 1}\limits_{b _{ji}^{(i - 1)} < 0,{b_{ji}} < 0} }^m {(x_j^{\left\lfloor {({d_i} - r)( - {\beta _{ji}})} \right\rfloor }\prod\limits_{\mathop {k = 1}\limits_{b_{jk}^{(k - 1)} < 0} }^{i - 1} {x_j^{b_{jk}^{(k - 1)}b_{ki}^{(k - 1)}}} )} \Big) + f\\
	&	= &x_i^{ - 1}\Big(\sum\limits_{r = 0}^{{d_i}} {{\rho _{i,r}}\prod\limits_{j = i}^n {x_j^{r{\beta _{ji}}}} \prod\limits_{j = 1}^{i - 1} {x_j^{(r - {d_i}){\beta _{ji}}}} } \\
	&&	\cdot \prod\limits_{\mathop {j = n + 1}\limits_{b _{ji}^{(i - 1)} \ge 0,{b _{ji}} \ge 0} }^m {x_j^{\left\lfloor {r{\beta _{ji}}} \right\rfloor }} \prod\limits_{\mathop {j = n + 1}\limits_{b_{ji}^{(i - 1)} < 0,{b_{ji}} \ge 0} }^m {x_j^{\left\lfloor {r{\beta _{ji}}} \right\rfloor }} \prod\limits_{\mathop {j = n + 1}\limits_{b _{ji}^{(i - 1)} < 0,{b _{ji}} < 0} }^m {x_j^{\left\lfloor {({d_i} - r)( - {\beta _{ji}})} \right\rfloor }} )\\
    && \cdot \prod\limits_{\mathop {j{\rm{ }} = {\rm{ }}n{\rm{ }} + {\rm{ }}1}\limits_{b_{ji}^{(i - 1)} < 0} }^m ({x_j^{ - {{\left[ {{b_{ji}}} \right]}_ + }}\prod\limits_{\mathop {k{\rm{ }} = {\rm{ }}1}\limits_{b_{jk}^{(k - 1)} < 0} }^{i - 1} {x_j^{b_{ki}^{(k - 1)}b_{jk}^{(k - 1)}}} }\Big)  + f.
\end{eqnarray*}

Note that, according to Equation~(\ref{(3.6)}), for any $j \in [n+1,m]$, if $b_{ji} <0$, then $\beta _{ji}^{(i - 1)} < 0$. Thus we have
\[\prod\limits_{\mathop {j = n + 1}\limits_{b_{ji}^{(i - 1)} \ge 0,{b_{ji}} < 0} }^m {x_j^{\left\lfloor {({d_i} - r)( - {\beta _{ji}})} \right\rfloor }}  = 1.\]

Hence, we obtain
   \begin{eqnarray*}
   	\prod\limits_{k = 1}^{i - 1} {x_k^{ - {b_{ki}}}} x_i^{(i)} &=&x_i^{ - 1}(\sum\limits_{r = 0}^{{d_i}} {{\rho _{i,r}}\prod\limits_{j = i}^n {x_j^{r{\beta _{ji}}}} \prod\limits_{j = 1}^{i - 1} {x_j^{(r - {d_i}){\beta _{ji}}}} } \prod\limits_{\mathop {j{\rm{ }} = {\rm{ }}n{\rm{ }} + {\rm{ }}1}\limits_{{b_{ji}} \ge 0} }^m {x_j^{\left\lfloor {r{\beta _{ji}}} \right\rfloor }} \prod\limits_{\mathop {j{\rm{ }} = {\rm{ }}n{\rm{ }} + {\rm{ }}1}\limits_{{b_{ji}} < 0} }^m {x_j^{\left\lfloor {({d_i} - r)( - {\beta _{ji}})} \right\rfloor }} )\\
&&\cdot\prod\limits_{\mathop {j = n + 1}\limits_{b_{ji}^{(i - 1)} < 0} }^m ({x_j^{ - {{\left[ {{b_{ji}}} \right]}_ + }}\prod\limits_{\mathop {k = 1}\limits_{b_{jk}^{(k - 1)} < 0} }^{i - 1} {x_j^{b_{ki}^{(k - 1)}b_{jk}^{(k - 1)}}} })  + f\\
&\mathop = \limits^{(\ref{ex1})}& {x'_i}\prod\limits_{\mathop {j{\rm{ }} = {\rm{ }}n{\rm{ }} + {\rm{ }}1}\limits_{b_{ji}^{(i - 1)} < 0} }^m( {x_j^{ - {{\left[ {{b_{ji}}} \right]}_ + }}\prod\limits_{\mathop {k{\rm{ }} = {\rm{ }}1}\limits_{b_{jk}^{(k - 1)} < 0} }^{i - 1} {x_j^{b_{ki}^{(k - 1)}b_{jk}^{(k - 1)}}} })  + f.
\end{eqnarray*}
Then by combining Equation~(\ref{include}), we get
\[x'_i \in \mathbb{ZP}[{x_1},x_1^{(n)}, \ldots ,{x_n},x_n^{(n)}].\]
\end{itemize}
The proof is completed.
\end{proof}

\begin{theorem}\label{thm1}
Let  ${\Sigma}=( {\widetilde{\mathbf{x}},\rho,{\widetilde B}})$ be an acyclic and coprime generalized seed. Then we have $\mathcal{A}( {\Sigma})={\mathcal{L}^{(n)}}( \Sigma).$
\end{theorem}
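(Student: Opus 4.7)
The plan is a short three-term sandwich argument combining Proposition~\ref{prop} with Theorem~\ref{standard-1}; essentially all of the real work has already been absorbed into the proof of Proposition~\ref{prop}.

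First, the inclusion $\mathcal{L}^{(n)}(\Sigma)\subseteq \mathcal{A}(\Sigma)$ is immediate from the definitions: by Definition~\ref{3.1} and Definition~\ref{3.2}, every generator $x_i$ (for $i\in[1,n]$) is an initial cluster variable and every $x_i^{(n)}$ is a cluster variable in the seed $\Sigma^{(n)}$, which is mutation-equivalent to $\Sigma$. Hence these generators lie in $\mathcal{A}(\Sigma)$, and the $\mathbb{ZP}$-subalgebra they generate does too.

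Next, I would go the other direction. Proposition~\ref{prop} asserts that every $x'_i$ belongs to $\mathcal{L}^{(n)}(\Sigma)$, so the classical lower bound $\mathbb{ZP}[x_1,x'_1,\ldots,x_n,x'_n]$ is a $\mathbb{ZP}$-subalgebra of $\mathcal{L}^{(n)}(\Sigma)$. Now I invoke the hypothesis that $\Sigma$ is acyclic and coprime: Theorem~\ref{standard-1} then says the standard monomials in $x_1,x'_1,\ldots,x_n,x'_n$ form a $\mathbb{ZP}$-basis of $\mathcal{A}(\Sigma)$. In particular, $\mathcal{A}(\Sigma)$ is generated as a $\mathbb{ZP}$-algebra by the set $\{x_1,x'_1,\ldots,x_n,x'_n\}$, so
\[
\mathcal{A}(\Sigma)=\mathbb{ZP}[x_1,x'_1,\ldots,x_n,x'_n].
\]

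Combining the two inclusions yields the sandwich
\[
\mathcal{A}(\Sigma)=\mathbb{ZP}[x_1,x'_1,\ldots,x_n,x'_n]\subseteq \mathcal{L}^{(n)}(\Sigma)\subseteq \mathcal{A}(\Sigma),
\]
which forces equality throughout and in particular gives $\mathcal{A}(\Sigma)=\mathcal{L}^{(n)}(\Sigma)$. There is no real obstacle at this stage: the heavy lifting, namely showing that each $x'_i$ can actually be expressed as a polynomial in the $x_j$ and the generalized projective cluster variables $x_j^{(n)}$, was already carried out in Proposition~\ref{prop} through the case analysis on the signs of $b_{ji}^{(i-1)}$ versus $b_{ji}$. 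Given Proposition~\ref{prop} and Theorem~\ref{standard-1}, the theorem is a formal corollary, and the only thing I would double-check is that the acyclicity hypothesis (together with the convention $b_{ij}\ge 0$ for $i>j$) is exactly what makes $\mu_i\cdots\mu_2\mu_1$ a sink sequence, so that $\Sigma^{(n)}$ is a legitimate mutation-equivalent seed providing the generators of $\mathcal{L}^{(n)}(\Sigma)$.
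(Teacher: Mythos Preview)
Your proposal is correct and matches the paper's own proof, which consists of the single sentence ``It follows from Theorem~\ref{standard-1} and Proposition~\ref{prop}.'' You have simply written out the sandwich argument explicitly, and your additional remarks about why $\mathcal{L}^{(n)}(\Sigma)\subseteq\mathcal{A}(\Sigma)$ and why Theorem~\ref{standard-1} yields $\mathcal{A}(\Sigma)=\mathbb{ZP}[x_1,x'_1,\ldots,x_n,x'_n]$ are accurate elaborations of what the paper leaves implicit.
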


\begin{proof}
It follows from Theorem \ref{standard-1} and Proposition \ref{prop}.
\end{proof}

 \begin{definition}
 A monomial in ${x_1},x_1^{(n)}, \ldots ,{x_n},x_n^{(n)}$ is called  a projective standard  monomial if it contains no
 product of the form ${x_i}x_i^{(n)}$ for any $i \in [1,n]$.
 \end{definition}

\begin{remark}\label{rem}
For any $i \in [1,n],$ we have $$x_i^{\left( n \right)}{x_i} = \sum\limits_{r = 0}^{{d_i}} {{\rho _{i,r}}\prod\limits_{j = i}^n {x_j^{r{\beta _{ji}}}} \prod\limits_{j = 1}^{i - 1} {{{\left( {x_j^{(i - 1)}} \right)}^{ - r{\beta _{ji}}}}} \prod\limits_{j= n + 1}^m {x_j^{\left\lfloor {{{\left[ {r\beta _{ji}^{\left( {i - 1} \right)}} \right]}_ + }} \right\rfloor  + \left\lfloor {{{\left[ {\left( {{d_i} - r} \right)\left( { - \beta _{ji}^{\left( {i - 1} \right)}} \right)} \right]}_ + }} \right\rfloor }} } .$$  Therefore the projective standard monomials in ${x_1},x_1^{(n)}, \ldots ,{x_n},x_n^{(n)}$ span   ${\mathcal{L}^{(n)}}(\Sigma)$ as a $\mathbb{ZP}$-module.
\end{remark}

For any $\mathbf{a} = \left( {{a_1},{a_2}, \ldots, {a_n}} \right) \in {{\mathbb{Z}}^n},$ we denote
$${\mathbf{x}^\mathbf{a}} := x_1^{{a_1}}x_2^{{a_2}} \ldots x_n^{{a_n}}.$$

Let $\prec$ denote the lexicographic order on $\mathbb{Z}^{n}$, i.e., for any two vectors $\mathbf{a} = \left( {{a_1},{a_2}, \ldots, {a_n}} \right)$, $\mathbf{a}' = \left( {{a'_1},{a'_2}, \ldots, {a'_n}} \right)$ $\in\ZZ^n$ satisfy that $\mathbf{a}\prec\mathbf{a}'$ if and only if  there exists $k \in [1,n]$ such that $a_k<a'_k$ and $a_i=a'_i$ for all $i \in [1,k-1]$.
This order induces the lexicographic order on the Laurent monomials as
\begin{gather*}
	\mathbf{x}^{\mathbf{a}}\prec\mathbf{x}^{\mathbf{a}'}
	\quad \text{ if } \quad
	\mathbf{a}\prec\mathbf{a}'.
\end{gather*}

\begin{definition}	
We call ${\mathbf{x}^\mathbf{a}}$  the first Laurent monomial of the Laurent polynomial $Y = {\mathbf{x}^\mathbf{a}}+\sum\limits_y^{} {{\mathbf{x}^{\mathbf{a}_y}}}$, if $\mathbf{a}_y\prec\mathbf{a}$ for any $y$.	
\end{definition}

\begin{lemma}\label{L3.5}
The first  Laurent monomial of 	$x_i^{(n)}$ is $x_i^{ - 1}\prod\limits_{j = i + 1}^n {x_j^{{k_j}}} P_i,$ where ${k_j} \in {\mathbb{Z}_{ \ge 0}}$ and $0\neq{P_i} \in \mathbb{ZP}.$
\end{lemma}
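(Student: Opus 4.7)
The plan is to prove the lemma by induction on $i\in[1,n]$. For the base case $i=1$, the exchange relation \eqref{ex1} gives
\begin{equation*}
x_1^{(1)}=x_1'=x_1^{-1}\sum_{r=0}^{d_1}\rho_{1,r}\prod_{j=2}^{n}x_j^{r\beta_{j1}}\prod_{j=n+1}^{m}x_j^{\lfloor[r\beta_{j1}]_+\rfloor+\lfloor[-(d_1-r)\beta_{j1}]_+\rfloor},
\end{equation*}
and since $\beta_{j,1}\geq 0$ for $j>1$ under the standing ordering $b_{pq}\geq 0$ for $p>q$, the lex maximum over $r$ is attained at $r=d_1$, yielding first Laurent monomial $x_1^{-1}\prod_{j=2}^{n}x_j^{d_1\beta_{j1}}\cdot P_1$ with $0\neq P_1\in\mathbb{ZP}$ (a positive frozen monomial, or a sum of such if column $1$ of $B$ is zero and all $r$'s tie).

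For the inductive step, fix $i$ and assume that for every $j<i$ the first Laurent monomial of $x_j^{(j)}$ has the form $x_j^{-1}\prod_{k=j+1}^{n}x_k^{\ell_k^{(j)}}P_j$ with $\ell_k^{(j)}\in\mathbb{Z}_{\geq 0}$ and $0\neq P_j\in\mathbb{ZP}$. The exchange relation of $x_i^{(i)}=\mu_i(x_i)$ in the seed $\Sigma^{(i-1)}$ has the form
\begin{equation*}
x_i^{(i)}=x_i^{-1}\sum_{r=0}^{d_i}\rho_{i,r}\prod_{j=i+1}^{n}x_j^{r\beta_{ji}}\prod_{j=1}^{i-1}\bigl(x_j^{(j)}\bigr)^{-r\beta_{ji}}M_{i,r},
\end{equation*}
where $M_{i,r}$ is a monomial in the frozens and $-r\beta_{ji}\in\mathbb{Z}_{\geq 0}$ since $\beta_{ji}\leq 0$ for $j<i$. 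Because $\mathbb{ZP}$ is a domain, the first Laurent monomial of a product equals the product of first Laurent monomials, so substituting the inductive hypothesis into the $r$-th summand gives its leading monomial with $x_s$-exponent
\begin{equation*}
a_s(r)=\begin{cases} r\beta_{si}-r\sum_{j<s}\beta_{ji}\ell_s^{(j)}, & s<i,\\ -1-r\sum_{j<i}\beta_{ji}\ell_i^{(j)}, & s=i,\\ r\beta_{si}-r\sum_{j<i}\beta_{ji}\ell_s^{(j)}, & s>i.\end{cases}
\end{equation*}

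The core of the argument is the lex comparison of the vectors $(a_1(r),\ldots,a_n(r))$ as $r$ ranges over $[0,d_i]$, which I split into two cases. \textbf{(Case A)} If $\beta_{ji}=0$ for every $j<i$, then every $r$-th summand has $a_s(r)=0$ for $s<i$, $a_i(r)=-1$, and $a_s(r)=r\beta_{si}\geq 0$ for $s>i$; the lex maximum is achieved at $r=d_i$ (uniquely if some $\beta_{si}>0$ for $s>i$, else all $r$'s tie), producing first Laurent monomial $x_i^{-1}\prod_{j>i}x_j^{d_i\beta_{ji}}P_i$ with $P_i$ either the positive frozen monomial $M_{i,d_i}$ or the sum $\sum_r\rho_{i,r}M_{i,r}=x_ix_i^{(i)}\neq 0$ in the tied subcase. \textbf{(Case B)} Otherwise, let $j_0$ be the smallest index in $[1,i-1]$ with $\beta_{j_0,i}<0$. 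Then for every $r>0$ the first $j_0-1$ coordinates of $(a_s(r))$ agree with those of $(a_s(0))$ (all equal $0$), while $a_{j_0}(r)=r\beta_{j_0,i}<0$ and $a_{j_0}(0)=0$, so the $r=0$ summand strictly wins in lex. The first Laurent monomial is therefore $x_i^{-1}M_{i,0}$, which has the required form with $k_j=0$ for $j>i$ and $P_i=M_{i,0}=\prod_{j>n,\,\beta_{ji}^{(i-1)}<0}x_j^{-d_i\beta_{ji}^{(i-1)}}\neq 0$.

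The main obstacle is the careful bookkeeping of how the leading $x_s$-exponent of each $r$-th summand breaks into contributions from the direct factor $\prod_{j>i}x_j^{r\beta_{ji}}$ and from substituting the inductively known leading terms of $(x_j^{(j)})^{-r\beta_{ji}}$ for $j<i$; once the closed form for $a_s(r)$ is in hand, the sign conditions $\beta_{ji}\leq 0$ (for $j<i$), $\beta_{ji}\geq 0$ (for $j>i$), and $\ell_k^{(j)}\geq 0$ force the lex comparison into exactly the two cases above, each producing the required shape $x_i^{-1}\prod_{j=i+1}^{n}x_j^{k_j}P_i$ with $k_j\in\mathbb{Z}_{\geq 0}$ and $0\neq P_i\in\mathbb{ZP}$.
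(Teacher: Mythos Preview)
Your proof is correct and follows essentially the same inductive strategy as the paper: the same base case, the same expression for $x_i^{(i)}$ in terms of the $x_j^{(j)}$ for $j<i$, and the same dichotomy according to whether some $\beta_{ji}$ with $j<i$ is nonzero (your Case~B is the paper's case~(1), your Case~A is its case~(2)). Your explicit choice of $j_0$ as the \emph{smallest} index with $\beta_{j_0,i}<0$ and the closed formula for the exponents $a_s(r)$ make the lex comparison cleaner than the paper's somewhat terse ``thus we obtain,'' but the underlying argument is the same.
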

\begin{proof}
When $k=1$, we have
$$x_1^{(n)}{\rm{ }} =x_1^{ - 1}\Big(\sum\limits_{r = 0}^{{d_1}} {{\rho _{1,r}}\prod\limits_{j = 2}^n {x_j^{r{\beta _{j1}}}} } \prod\limits_{j = n + 1}^m {x_j^{\left\lfloor {{{\left[ {r\beta _{j1}^{}} \right]}_ + }} \right\rfloor  + \left\lfloor {{{\left[ { - ({d_1} - r)\beta _{j1}^{}} \right]}_ + }} \right\rfloor }} )=x_1^{ - 1}(\sum\limits_{r = 0}^{{d_1}} {{P_{1,r}}\prod\limits_{j = 2}^n {x_j^{r{\beta _{j1}}}} } \Big),$$
where ${P_{1,r}} = {\rho _{1,r}}\prod\limits_{j = n + 1}^m {x_j^{\left\lfloor {{{\left[ {r\beta _{j1}^{}} \right]}_ + }} \right\rfloor  + \left\lfloor {{{\left[ { - ({d_1} - r)\beta _{j1}^{}} \right]}_ + }} \right\rfloor }}. $
	
Thus the first  Laurent monomial of $x_1^{(n)}$ is $x_1^{{-1}}\prod\limits_{j = 2}^n {x_j^{ {b_{j1}}}} P_{{1,d_1}}^{ }$.

Suppose that   the first  Laurent monomial of 	$x_{k}^{(n)}$ has the form $x_{k}^{ - 1}\prod\limits_{j = k+1 }^n {x_j^{{k_j}}} P_{k}$ for each  $k\leq i-1$.
When $k=i$, we have
\begin{eqnarray*}
			x_i^{(n)} &=& x_i^{ - 1}\Big(\sum\limits_{r = 0}^{{d_i}} {{\rho _{i,r}}\prod\limits_{j = i + 1}^n {x_j^{r{\beta _{ji}}}} \prod\limits_{j = 1}^{i - 1} {{{(x_j^{(n)})}^{ - r{\beta _{ji}}}}} }
			\prod\limits_{j = n + 1}^m {x_j^{\left\lfloor {{{\left[ {r\beta _{ji}^{(i - 1)}} \right]}_ + }} \right\rfloor  + \left\lfloor {{{\left[ { - ({d_i} - r)\beta _{ji}^{(i - 1)}} \right]}_ + }} \right\rfloor }} \Big)\\
			&=& x_i^{ - 1}\Big(\sum\limits_{r = 1}^{{d_i}} {{P_{i,r}}\prod\limits_{j = i + 1}^n {x_j^{r{\beta _{ji}}}} \prod\limits_{j = 1}^{i - 1} {{{(x_j^{(n)})}^{ - r{\beta _{ji}}}}} }  + {P_{i,0}}\Big),
	\end{eqnarray*}
where ${P_r} =\rho _{i,r} \prod\limits_{j = n + 1}^m {x_j^{\left\lfloor {{{\left[ {r\beta _{ji}^{(i - 1)}} \right]}_ + }} \right\rfloor  + \left\lfloor {{{\left[ { - ({d_i} - r)\beta _{ji}^{(i - 1)}} \right]}_ + }} \right\rfloor }} .$

We have the following two cases.
\begin{itemize}%[leftmargin=*]
\item[(1)] If there exists $t \in [1,i-1]$ such that  ${\beta _{ti}} \ne 0$,
	 then the first  Laurent monomial of ${(x_t^{(n)})^{-r{\beta _{ti}}}}$ is $x_{t}^{ r{\beta _{ti}}}\prod\limits_{j = t+1 }^n {x_j^{-r{\beta _{ti}}{k_j}}} P_{t}^{-r{\beta _{ti}}}.$ Thus	we obtain the first  Laurent monomial of $	x_i^{(n)}$ is $x_i^{{-1}}P_{i,0}.$
\item[(2)] If  ${\beta _{ti}} = 0$ for any $t \in [1,i-1]$, then		
$$x_i^{(n)} = x_i^{ - 1}(\sum\limits_{r = 1}^{{d_i}} {{\rho _{i,r}}\prod\limits_{j = i + 1}^n {x_j^{r{\beta _{ji}}}} } {P_{i,r}} + {P_{i,0}}).$$
\end{itemize}
Thus the first  Laurent monomial of $	x_i^{(n)}$ is $x_i^{-1}\prod\limits_{j = i + 1}^n {x_j^{ {b_{ji}}}} P_{i,{d_i}}.$
	
Therefore the first  Laurent monomial of $	x_i^{(n)}$ must have the form $x_i^{ - 1}\prod\limits_{j = i + 1}^n {x_j^{{k_j}}} P_i,$
and the conclusion holds.
\end{proof}

 \begin{theorem}\label{t3.6}
Let  ${\Sigma}=( {\widetilde{\mathbf{x}},\rho,{\widetilde B}})$ be an acyclic and coprime generalized seed. Then the projective standard monomials in
${x_1},x_1^{(n)}, \ldots ,{x_n},x_n^{(n)}$ form a $\mathbb{ZP}$-basis of $\mathcal{A}( {\Sigma})$.
\end{theorem}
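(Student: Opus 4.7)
The plan is to combine the results already established earlier in the excerpt. First, Theorem~\ref{thm1} gives $\mathcal{A}(\Sigma)=\mathcal{L}^{(n)}(\Sigma)$ under the acyclic-and-coprime hypothesis, and Remark~\ref{rem} shows that the projective standard monomials span $\mathcal{L}^{(n)}(\Sigma)$ as a $\mathbb{ZP}$-module (because every relation $x_i x_i^{(n)}$ can be replaced by an element not containing this product). Hence it only remains to prove $\mathbb{ZP}$-linear independence of the projective standard monomials.

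The idea is to compute the first Laurent monomial of an arbitrary projective standard monomial and to show that distinct monomials have distinct leading exponent vectors. Let $M=\prod_{i=1}^{n}x_i^{a_i}(x_i^{(n)})^{b_i}$ be a projective standard monomial, so $a_ib_i=0$ for every $i\in[1,n]$. By the Laurent phenomenon, $M$ expands uniquely as a Laurent polynomial in $x_1,\ldots,x_n$ with coefficients in $\mathbb{ZP}$. Since $\mathbb{ZP}$ is an integral domain (it is a ring of Laurent polynomials in the frozen variables), leading terms in the lexicographic order multiply to leading terms, so the first Laurent monomial of $M$ is the product of the first Laurent monomials of its factors. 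Combining this with Lemma~\ref{L3.5}, which says that the first Laurent monomial of $x_i^{(n)}$ has the form $x_i^{-1}\prod_{j>i}x_j^{k_j^{(i)}}P_i$ with $k_j^{(i)}\in\mathbb{Z}_{\ge 0}$ and $0\neq P_i\in\mathbb{ZP}$, the first Laurent monomial of $M$ is $\mathbf{x}^{\mathbf{e}(M)}\prod_{i=1}^{n}P_i^{b_i}$, where the exponent vector $\mathbf{e}(M)=(e_1,\ldots,e_n)$ satisfies
\[
e_k \;=\; a_k-b_k+\sum_{i<k}b_i\,k_k^{(i)},
\]
and the coefficient $\prod_{i}P_i^{b_i}$ is nonzero.

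Next I would show that the assignment $M\mapsto\mathbf{e}(M)$ is injective on the set of projective standard monomials. I argue by induction on $k$. For $k=1$, $e_1=a_1-b_1$ together with the constraint $a_1b_1=0$ uniquely determines $(a_1,b_1)=([e_1]_+,[-e_1]_+)$. Once $(a_i,b_i)$ is known for every $i<k$, the value $e_k-\sum_{i<k}b_i k_k^{(i)}=a_k-b_k$ together with $a_kb_k=0$ pins down $(a_k,b_k)$. Thus $\mathbf{e}$ is injective and its image consists of pairwise distinct vectors in $\mathbb{Z}^n$.

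Finally, suppose a $\mathbb{ZP}$-linear relation $\sum_j c_jM_j=0$ in $\mathcal{A}(\Sigma)$ where the $M_j$ are distinct projective standard monomials and $c_j\in\mathbb{ZP}$. Pick an index $j_0$ with $c_{j_0}\neq 0$ such that $\mathbf{e}(M_{j_0})$ is lexicographically maximal among $\{\mathbf{e}(M_j):c_j\neq 0\}$. By the injectivity just established, $\mathbf{e}(M_{j_0})$ is strictly larger than every other exponent vector in the relation, so when we expand the sum as a Laurent polynomial in $x_1,\ldots,x_n$, the coefficient of $\mathbf{x}^{\mathbf{e}(M_{j_0})}$ is $c_{j_0}\prod_{i}P_i^{b_{ij_0}}$. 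Since $\mathbb{ZP}$ is a domain and $\prod_i P_i^{b_{ij_0}}\neq 0$, this product is nonzero, which contradicts the assumed vanishing. Hence all $c_j=0$, which completes the proof. The main obstacle I anticipate is the bookkeeping in the injectivity step, where one must use the essentially upper-triangular form of the leading exponents guaranteed by Lemma~\ref{L3.5} to peel off one variable at a time.
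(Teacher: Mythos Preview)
Your proposal is correct and follows essentially the same route as the paper: both use Theorem~\ref{thm1} and Remark~\ref{rem} for spanning, and both deduce linear independence from Lemma~\ref{L3.5} by observing that the leading Laurent monomial of a projective standard monomial is determined in an upper-triangular way by its exponent data. The paper phrases the key step as ``$\mathbf{a}\prec\mathbf{a}'$ implies the first monomial of $\mathbf{x}^{\langle\mathbf{a}\rangle}$ precedes that of $\mathbf{x}^{\langle\mathbf{a}'\rangle}$,'' whereas you unpack this into the explicit formula $e_k=a_k-b_k+\sum_{i<k}b_i k_k^{(i)}$ and an inductive recovery of $(a_k,b_k)$; these are two ways of saying the same thing.
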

\begin{proof}
We label projective standard monomials in
 ${x_1},x_1^{(n)}, \ldots ,{x_n},x_n^{(n)}$ by the points $\mathbf{a} = \left( {{a_1},{a_2}, \ldots {a_n}} \right) \in {{\mathbb{Z}}^n},$
  where ${\mathbf{x}^{\left\langle \mathbf{a} \right\rangle }} = x_1^{\left\langle {{a_1}} \right\rangle }x_2^{\left\langle {{a_2}} \right\rangle } \ldots x_n^{\left\langle {{a_n}} \right\rangle }$ and	
 	\[x_i^{\left\langle {{a_i}} \right\rangle } = \left\{ {\begin{array}{*{20}{c}}
 			{x_i^{{a_i}} ,   {\hspace{2.5cm}}if{\hspace{0.5cm}}{a_i} \ge 0};\\
 			{{{(x_i^{(n)})}^{ - {a_i}}},  {\hspace{1.5cm}}if{\hspace{0.5cm}}{a_i} < 0 }.
 \end{array}} \right.\]
	
 According to Lemma \ref{L3.5}, if $\mathbf{a}\prec\mathbf{a}'$, then the first monomial in $\mathbf{x}^{\left\langle \mathbf{a} \right\rangle} $ precedes the first monomial in ${\mathbf{x}^{\left\langle \mathbf{a}' \right\rangle }}$. Thus the projective
 standard monomials in ${x_1},x_1^{(n)}, \ldots ,{x_n},x_n^{(n)}$ are linearly independent over $\mathbb{ZP}$.	Therefore the proof follows from Theorem  \ref{thm1}.		
\end{proof}
\begin{remark}
\begin{itemize}%[leftmargin=*]
\item[(1)] If we set $d_i=1$ for all $i \in [1,n]$, one can obtain a $\mathbb{ZP}$-basis of the classical acyclic cluster algebra which is proved in \cite{BN}.
\item[(2)] For an acyclic cluster algebra, this basis should agree with an associated dual PBW basis in the language of \cite{KQ}. So we call the basis constructed in Theorem~\ref{t3.6} the dual PBW basis.
\end{itemize}
\end{remark}

\begin{example}
Consider the acyclic generalized seed $(\mathbf{\widetilde{x}},\rho,\widetilde{B})$ as follows:
\[ \mathbf{\widetilde{x}}=\{x_1,x_2,x_3,x_4,x_5,x_6,x_7\}, \rho=\{\rho_1,\rho_2, \rho_3\}\ and\ \widetilde B = \left( {\begin{array}{*{20}{c}}
 		0&{ - 1}&{ - 3}\\
 		2&0&{ - 3}\\
 		2&1&0\\
 		{ - 1}&2&1\\
 		1&{ - 1}&1\\
 		1&3&{ - 2}\\
 		{ - 2}&3&{ - 3}
 \end{array}} \right), \]
%and\ \rho=\{\rho_1,\rho_2, \rho_3\},\]
where  $\rho _1=\{1,l,1\}$, $\rho _2=\{1,1\}$ and $\rho_3=\{1,h,d,1\}$ for any  monomials $h,d,l\in\mathbb{Z}[x_{4},x_5,x_6, x_7]$.

According to the exchange relation, we have
\begin{eqnarray*}
 	{x'_1}& =& x_1^{ - 1}(x_4^{}x_7^2 + l{x_2}{x_3}{x_7} + x_2^2x_3^2x_5^{}x_6^{}),\\
 	{x'_2} &=& x_2^{ - 1}(x_1^{}x_5^{} + x_3^{}x_4^2x_6^3x_7^3),\\
 	{x'_3} &=& x_3^{ - 1}(x_1^3x_2^3x_6^2x_7^3 + hx_1^2x_2^2{x_6}x_7^2 + dx_1^{}x_2^{}x_7^{} + x_4^{}x_5^{}).
\end{eqnarray*}

By mutating the seed $(\mathbf{\widetilde{x}},\rho,\widetilde{B})$ in the direction $1$, we have
$${{\widetilde B}^{(1)}}= \left( {\begin{array}{*{20}{c}}
		0&1&3\\
		{ - 2}&0&{ - 3}\\
		{ - 2}&1&0\\
		1&1&{ - 2}\\
		{ - 1}&{ - 1}&1\\
		{ - 1}&3&{ - 2}\\
		2&1&{ - 9}
\end{array}} \right),$$ and
\[x_1^{(1)} = {x'_1} = x_1^{ - 1}(x_4^{}x_7^2 + k{x_2}{x_3}{x_7} + x_2^2x_3^2x_5^{}x_6^{}) = x_1^{ - 1}(x_4^{}x_7^2 + {A^{(1)}}),\]
where  ${A^{(1)}} = l{x_2}{x_3}{x_7} + x_2^2x_3^2x_5^{}x_6^{}.$

Continue to mutate the seed $({{{\bf{\widetilde x}}}^{(1)}},\rho^{(1)} ,{{\widetilde B}^{(1)}})$  in the direction $2$, we get
$${{\tilde B}^{(2)}} = \left( {\begin{array}{*{20}{c}}
		0&{ - 1}&3\\
		2&0&3\\
		{ - 2}&{ - 1}&0\\
		1&{ - 1}&{ - 2}\\
		{ - 3}&1&{ - 2}\\
		{ - 1}&{ - 3}&{ - 2}\\
		2&{ - 1}&{ - 9}
\end{array}} \right),$$ and
\[x_2^{(2)} = x_2^{ - 1}(x_1^{(1)}{x_3}{x_4}x_6^3{x_7} + {x_5}) = x_2^{ - 1}({A^{(2)}} + {x_5}),\]
where ${A^{(2)}} = x_1^{(1)}{x_3}{x_4}x_6^3{x_7}.$

By multiplying both sides of the above equation by $x_1$, we have	
\begin{eqnarray*}
	 && {x_1}x_2^{(2)} = x_2^{ - 1}({x_1}x_1^{(1)}{x_3}{x_4}x_6^3{x_7} + {x_1}{x_5})
	= x_2^{ - 1}((x_4^{}x_7^2 + {A^{(1)}}){x_3}{x_4}x_6^3{x_7} + {x_1}{x_5})\\
&	= &x_2^{ - 1}({x_3}x_4^2x_6^3x_7^3 + {x_1}{x_5}) + {f_1}
	= {x'_2} + {f_1},
\end{eqnarray*}	
where ${f_1}=x_2^{ - 1}{x_3}{x_4}x_6^3{x_7}{A^{(1)}} = lx_3^2{x_4}x_6^3{x^2_7} + x_2x_3^3{x_4}x_5^{}x_6^4{x_7}.$
	
Note that $ {f_1} \in \mathbb{Z} \mathbb{P}  \left[ {{x_1}, {x_2} ,{x_3}} \right],$  thus we obtain $x'_2\in \mathbb{ZP}  \left[ x_1, x_1^{(3)}, {x_2}, x_2^{(3)}, {x_3}, x_3^{(3)} \right]$.
	
Continue to mutate  the seed $({{{\bf{\widetilde x}}}^{(2)}},\rho^{(2)} ,{{\widetilde B}^{(2)}})$ in the  direction $3$, we get
\begin{eqnarray*}
		x_3^{(3)}& =& x_3^{ - 1}\big(x_4^2x_5^2x_6^2x_7^9 + hx_1^{(1)}x_2^{(2)}x_4^{}x_5^{}x_6^{}x_7^6 + d{(x_1^{(1)})^2}{(x_2^{(2)})^2}x_7^3
		+ {(x_1^{(1)})^3}{(x_2^{(2)})^3}\big)\\
		&=& x_3^{ - 1}\big(x_4^2x_5^2x_6^2x_7^9 + h(x_1^{ - 1}(x_4^{}x_7^2 + {A^{(1)}}))(x_2^{ - 1}({A^{(2)}} + {x_5}))x_4^{}x_5^{}x_6^{}x_7^6\\
		&&+ d{(x_1^{ - 1}(x_4^{}x_7^2 + {A^{(1)}}))^2}{(x_2^{ - 1}({A^{(2)}} + {x_5}))^2}x_7^3\\
		&&+ {(x_1^{ - 1}(x_4^{}x_7^2 + {A^{(1)}}))^3}{(x_2^{ - 1}({A^{(2)}} + {x_5}))^3}\big)\\
		&=& x_3^{ - 1}\big(x_4^2x_5^2x_6^2x_7^9 + hx_1^{ - 1}x_2^{ - 1}{(x_4^{}x_7^2 + {A^{(1)}})}{({A^{(2)}} + {x_5})}x_4^{}x_5^{}x_6^{}x_7^6\\
		&&+ dx_1^{ - 2}x_2^{ - 2}{(x_4^{}x_7^2 + {A^{(1)}})^2}{({A^{(2)}} + {x_5})^2}x_7^3\\
		&&+ x_1^{ - 3}x_2^{ - 3}{(x_4^{}x_7^2 + {A^{(1)}})^3}{({A^{(2)}} + {x_5})^3}\big)
\end{eqnarray*}	

By multiplying both sides of the above equation by $x_1^3x_2^3$, we obtain	
\begin{eqnarray*}
	x_1^3x_2^3x_3^{(3)}&=& x_3^{ - 1}\big(x_1^3x_2^3x_4^2x_5^2x_6^2x_7^9 + hx_1^2x_2^2(x_4^{}x_7^2 + {A^{(1)}})({A^{(2)}} + {x_5})x_4^{}x_5^{}x_6^{}x_7^6\\
	&&+ dx_1^{}x_2^{}{(x_4^{}x_7^2 + {A^{(1)}})^2}{({A^{(2)}} + {x_5})^2}x_7^3 + {(x_4^{}x_7^2 + {A^{(1)}})^3}{({A^{(2)}} + {x_5})^3}\big)\\
	&= &x_3^{ - 1}(x_1^3x_2^3x_4^2x_5^2x_6^2x_7^9 + hx_1^2x_2^2x_4^2x_5^2x_6^{}x_7^8 + dx_1^{}x_2^{}x_4^2x_5^2x_7^7 + x_4^3x_5^3x_7^6) + {f_2}\\
	&=&x_3^{ - 1}(x_1^3x_2^3x_6^2x_7^3 + hx_1^2x_2^2x_6^{}x_7^2 + dx_1^{}x_2^{}x_7^{} + x_4^{}x_5^{})x_4^2x_5^2x_7^6 + {f_2}\\
	&=& {x'_3}x_4^2x_5^2x_7^6 + {f_2}
\end{eqnarray*}		
where we denote  the remaining term by $f_2$.

It is easy to see that $f_2$ must have the following form
$$f_2=x^{-1}_3\big(\sum\limits_{1\leq i_1+i_2 \leq 6}g_{i_1,i_2}(x_1,x_2,x_3)({A^{(1)}})^{i_1}({A^{(2)}})^{i_2}\big),$$
where $g_{i_1,i_2}(x_1,x_2,x_3)\in \mathbb{ZP}\left[x_1,{x_2} ,{x_3} \right].$

Note that  both $x^{-1}_3A^{(1)}$ and $x^{-1}_3A^{(2)}$  belong to $\mathbb{ZP}   \left[ x_1, x_1^{(3)}, {x_2}, x_2^{(3)}, {x_3}, x_3^{(3)} \right]$, it follows that $ {f_2} \in \mathbb{ZP}  \left[ x_1, x_1^{(3)}, {x_2}, x_2^{(3)}, {x_3}, x_3^{(3)} \right].$ Thus we obtain ${x}'_3\in \mathbb{ZP}   \left[ x_1, x_1^{(3)}, {x_2}, x_2^{(3)}, {x_3}, x_3^{(3)} \right]$.
\end{example}

\section{A criteria of  generalized cluster algebras without 3-cycles}
Let  $( {\mathbf{\widetilde{x}},\rho,{\widetilde B}})$ be a generalized seed of a generalized cluster algebra of geometric type. We assume that
\[{b_{32}},{b_{21}}, {b_{13}} > 0.\]  Then in the associated directed graph $\Gamma\big(\widetilde{\mathbf{x}},\rho,\widetilde{B}\big)$ there is an oriented 3-cycle: $3 \to 2 \to  1   \to 3.$

According to the exchange relation, we have
\[\begin{array}{l}
	{x'_1} = \frac{{\sum\limits_{r = 0}^{{d_1}} {{P_{1,r}}x_2^{r {\beta _{21}}}x_3^{(r - {d_1}){\beta _{31}}}} }}{{{x_1}}},\\
	{x'_2} = \frac{{\sum\limits_{r = 0}^{{d_2}} {{P_{2,r}}x_1^{(r - {d_2}){\beta _{12}}}x_3^{r{\beta _{32}}}} }}{{{x_2}}},\\
	{x'_3} = \frac{{\sum\limits_{r = 0}^{{d_3}} {{P_{3,r}}x_1^{r{\beta _{13}}}x_2^{(r - {d_3}){\beta _{23}}}} }}{{{x_3}}},
\end{array}\]
where ${P_{i,r}} = {\rho _{i,r}}\prod\limits_{j = 4}^n {x_j^{r{{[{\beta _{ji}}]}_ + } + ({d_i} - r){{[ - {\beta _{ji}}]}_ + }}} \prod\limits_{j = n + 1}^m {x_j^{\left\lfloor {{{\left[ {r{\beta _{ji}}} \right]}_ + }} \right\rfloor  + \left\lfloor {{{\left[ {\left( {{d_i} - r} \right)\left( { - {\beta _{ji}}} \right)} \right]}_ + }} \right\rfloor }}$
for each $1\leq i\leq 3$.

Thus, we obtain
\[{x'_1}{x'_2}{x'_3} = \frac{{(\sum\limits_{r = 0}^{{d_1}} {{P_{1,r}}x_2^{r {\beta _{21}}}x_3^{(r - {d_1}){\beta _{31}}}} )(\sum\limits_{r = 0}^{{d_2}} {{P_{2,r}}x_1^{(r - {d_2}){\beta _{12}}}x_3^{r{\beta _{32}}}} )(\sum\limits_{r = 0}^{{d_3}} {{P_{3,r}}x_1^{r{\beta _{13}}}x_2^{(r - {d_3}){\beta _{23}}}} )}}{{{x_1}{x_2}{x_3}}}.\]

We denote
\begin{eqnarray}\label{(6.2)}
	[{r_1},{r_2},{r_3}]: &=&\frac{{{P_{1,{r_1}}}x_2^{{r_1}{\beta _{21}}}x_3^{({r_1} - {d_1}){\beta _{31}}}{P_{2,{r_2}}}x_1^{({r_2} - {d_2}){\beta _{12}}}x_3^{{r_2}{\beta _{32}}}{P_{3,{r_3}}}x_1^{{r_3}{\beta _{13}}}x_2^{({r_3} - {d_3}){\beta _{23}}}}}{{{x_1}{x_2}{x_3}}} \nonumber\\
	&=& \frac{{{P_{1,{r_1}}}{P_{2,{r_2}}}{P_{3,{r_3}}}x_1^{({r_2} - {d_2}){\beta _{12}} + {r_3}{\beta _{13}}}x_2^{({r_3} - {d_3}){\beta _{23}} + {r_1}{\beta _{21}}}x_3^{({r_1} - {d_1}){\beta _{31}} + {r_2}{\beta _{32}}}}}{{{x_1}{x_2}{x_3}}},
\end{eqnarray}
where  ${r_j}\in [0,{d_j}]$  for  each $1\leq j\leq 3$.

Then we get ${x'_1}{x'_2}  {x'_3} = \sum\limits_{{r_1}, {r_2},  {r_3}}^{} [{r_1} ,{r_2},  {r_3}]. $

\begin{theorem}\label{yheorem6.1}
With the above notations, if the standard monomials in ${{x_1},{x'_1}, \ldots ,{x_n},{x'_n}}$ are linearly independent
	over $\mathbb{ZP}$, then  the  directed graph $\Gamma\big(\widetilde{\mathbf{x}},\rho,\widetilde{B}\big)$ does not contain 3-cycles.
\end{theorem}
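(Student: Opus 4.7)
The plan is to argue by contrapositive: assume $\Gamma(\widetilde{\mathbf{x}}, \rho, \widetilde{B})$ contains an oriented 3-cycle, so after the relabeling in the excerpt we have $b_{32}, b_{21}, b_{13} > 0$. The goal is to produce a non-trivial $\mathbb{ZP}$-linear relation among standard monomials. Since $x'_1 x'_2 x'_3$ is itself a standard monomial, any alternative expression of it as a $\mathbb{ZP}$-combination of \emph{other} standard monomials suffices, and the identity $x'_1 x'_2 x'_3 = \sum_{r_1, r_2, r_3} [r_1, r_2, r_3]$ from~(\ref{(6.2)}) is the natural starting point. Write $E_1, E_2, E_3$ for the three exponents appearing in the numerator of $[r_1, r_2, r_3]$ in~(\ref{(6.2)}).

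The key observation is that, for each $i \in \{1, 2, 3\}$ with $\{i, j, k\} = \{1, 2, 3\}$, the exchange relation gives $x_i x'_i = \sum_{r_i} P_{i, r_i}\, x_j^{r_i \beta_{ji}} x_k^{(r_i - d_i)\beta_{ki}}$; hence summing over $r_i$ with $(r_j, r_k)$ held fixed yields the slice identity
\[
\sum_{r_i = 0}^{d_i} [r_1, r_2, r_3] \;=\; \frac{P_{j, r_j}\, P_{k, r_k}\, x_1^{E_1} x_2^{E_2} x_3^{E_3}}{x_1 x_2 x_3} \cdot x_i x'_i,
\]
in which the $r_i$-dependence has been absorbed into the factor $x_i x'_i$. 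On the three pairwise-disjoint ``canonical slabs''
\[
\mathcal{S}_1 = \{(r_1, d_2, 0) : 0 \leq r_1 \leq d_1\}, \;\;\mathcal{S}_2 = \{(0, r_2, d_3) : 0 \leq r_2 \leq d_2\}, \;\;\mathcal{S}_3 = \{(d_1, 0, r_3) : 0 \leq r_3 \leq d_3\},
\]
a direct check shows that the corresponding exponent $E_i$ vanishes (in fact these are the \emph{only} lattice points where $E_i = 0$, since each summand in $E_i$ is nonnegative), while the residual $x_j, x_k$-exponents after cancellation against $x_j x_k$ in the denominator are nonnegative by the sign hypothesis. Each slab-sum therefore reduces to $M_i \cdot x'_i$, where $M_i$ is a monomial not involving $x_i$ (times a $\mathbb{ZP}$-coefficient), hence a standard monomial.

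For each point $(r_1, r_2, r_3)$ in the residual set (not lying in any slab), all three exponents $E_1, E_2, E_3$ are $\geq 1$, so $[r_1, r_2, r_3]$ reduces to an honest monomial in $x_1, \ldots, x_n$ and the frozen variables, hence to a standard monomial. Assembling the slab and residual contributions produces the identity
\[
x'_1 x'_2 x'_3 \;=\; M_1 \cdot x'_1 + M_2 \cdot x'_2 + M_3 \cdot x'_3 + N,
\]
where $N$ is a $\mathbb{ZP}$-linear combination of pure monomials in $x_1, \ldots, x_n$. This is a non-trivial $\mathbb{ZP}$-linear dependence among standard monomials, because $x'_1 x'_2 x'_3$ (the unique standard monomial on the left with three $x'$-factors) does not occur on the right-hand side, and moreover $N$ is itself nonzero: the corner $(0, 0, 0)$ contributes $P_{1,0}\,P_{2,0}\,P_{3,0}\, x_1^{-b_{12}-1} x_2^{-b_{23}-1} x_3^{-b_{31}-1}$, whose monomial signature cannot be produced by any $M_i x'_i$ (each of which carries only a single $x_i^{-1}$). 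This contradicts the assumed linear independence.

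The main technical point I expect is verifying the slice identity $\sum_{r_i} P_{i, r_i} x_j^{r_i \beta_{ji}} x_k^{(r_i - d_i)\beta_{ki}} = x_i x'_i$ in the full generalized setting: one must confirm that the $P_{i, r_i}$ of~(\ref{(6.2)}) correctly absorb the contributions of the other cluster variables $x_4, \ldots, x_n$ and of the frozen variables appearing in the exchange relation of Definition~\ref{def of generalized seed}. This reduces to observing that $\beta_{ji} \in \mathbb{Z}$ for $j \in [1, n]$ (so the floor functions in the exchange relation are trivial there), while for frozens the floor expressions match by construction. Once this is in place, the disjointness of the slabs, the characterization of residual points as those with $E_1, E_2, E_3 \geq 1$, and the sign bookkeeping for the residual exponents on each slab are all direct, completing the proof.
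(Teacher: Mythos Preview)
Your proposal is correct and follows essentially the same approach as the paper: both argue by contrapositive, decompose $x'_1x'_2x'_3=\sum_{r_1,r_2,r_3}[r_1,r_2,r_3]$ into the three slab sums $\sum_{r_1}[r_1,d_2,0]$, $\sum_{r_2}[0,r_2,d_3]$, $\sum_{r_3}[d_1,0,r_3]$ (each collapsing via the exchange relation to a single standard monomial involving one $x'_i$) plus a residual that is shown to lie in $\mathbb{ZP}[x_1,\ldots,x_n]$ by checking the three exponent inequalities. One small remark: your ``moreover $N$ is itself nonzero'' clause is unnecessary for the argument (the relation is already non-trivial because $x'_1x'_2x'_3$ appears with coefficient~$1$ on the left and not at all on the right), and your justification for it does not quite show that the residual terms cannot cancel among themselves; the paper simply omits this step, relying implicitly on the $x'_1x'_2x'_3$ term.
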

\begin{proof}
By Equation~(\ref{(6.2)}), we have
$$
{x'_1}{x'_2}{x'_3} = \sum\limits_{{r_1},{r_2},{r_3}} {[{r_1},{r_2},{r_3}]}
		= \sum\limits_{{r_2}} {[0,{r_2},{d_3}]}  + \sum\limits_{{r_3}} {[{d_1},0,{r_3}]}+\sum\limits_{{r_1}} {[{r_1},{d_2},0]}   +A,
$$
where $A = \sum\limits_{{r_1},{r_2},{r_3}} {[{r_1},{r_2},{r_3}]}  - \sum\limits_{{r_2}} {[0,{r_2},{d_3}]} -\sum\limits_{{r_3}} {[{d_1},0,{r_3}]}-\sum\limits_{{r_1}} {[{r_1},{d_2},0]}.$
	
Note that we have	
$$
\sum\limits_{{r_2}} {[0,{r_2},{d_3}]} =\frac{{\sum\limits_{{r_2} = 0}^{{d_2}} {{P_{1,0}}x_3^{ - {b_{31}}}{P_{2,{r_2}}}x_1^{({r_2} - {d_2}){\beta _{12}}}x_3^{{r_2}{\beta _{32}}}{P_{3,{d_3}}}x_1^{{b_{13}}}} }}{{{x_1}{x_2}{x_3}}}
		= {P_{1,0}}{P_{3,{d_3}}}x_1^{{b_{13}} - 1}{{x'}_2}x_3^{ - {b_{31}} - 1}.
$$
	
	Similarly, we have
	\begin{eqnarray*}
		\sum\limits_{{r_3}} {[{d_1},0,{r_3}]}  = {P_{1,{d_1}}}{P_{2,0}}x_1^{ - {b_{12}} - 1}x_2^{{b_{21}} - 1}{{x'}_3},\\
		\sum\limits_{{r_1}} {[{r_1},{d_2},0]}  = {P_{2,{d_2}}}{P_{3,0}}{{x'}_1}x_2^{ - {b_{23}} - 1}x_3^{{b_{32}} - 1}.
	\end{eqnarray*}
	
	Thus, we obtain that  $\sum\limits_{{r_2}} {[0,{r_2},{d_3}]}$, $\sum\limits_{{r_3}} {[{d_1},0,{r_3}]}$ and $\sum\limits_{{r_1}} {[{r_1},{d_2},0]}$ are all standard monomials in ${{x_1},{{x'}_1}, \ldots ,{x_n},{{x'}_n}}$ over $\mathbb{ZP}$.
	
	Note that  any monomial in $A$ has the following form
	\begin{eqnarray}\label{nonboth}
		{P_{1,{r_1}}}{P_{2,{r_2}}}{P_{3,{r_3}}}x_1^{({r_2} - {d_2}){\beta _{12}} + {r_3}{\beta _{13}} - 1}x_2^{({r_3} - {d_3}){\beta _{23}} + {r_1}{\beta _{21}} - 1}x_3^{({r_1} - {d_1}){\beta _{31}} + {r_2}{\beta _{32}} - 1},
	\end{eqnarray}
	where if  $r_1=0$, then  $r_3\ne d_3$; if  $r_2=0$, then  $r_1\ne d_1$; if  $r_3=0$, then $r_2\ne d_2$.
	
	We now consider the indices of $x_i$ $(1\leq i\leq 3)$ in (\ref{nonboth}). A simple argument shows that $$({r_3} - {d_3}){\beta _{23}} + {r_1}{\beta _{21}} - 1 \ge 0,$$
	$$({r_2} - {d_2}){\beta _{12}} + {r_3}{\beta _{13}} - 1 \ge 0,$$
	$$({r_1} - {d_1}){\beta _{31}} + {r_2}{\beta _{32}} - 1 \ge 0.$$
	
	Thus $A$ is a polynomial in ${{x_1}, \ldots ,{x_n}}$ over $\mathbb{ZP}$. Note that
	\begin{eqnarray*}
		&&{x'_1}{x'_2}{x'_3} \\
		&= &{P_{1,0}}{P_{3,{d_3}}}{{x'}_2}x_1^{{b_{13}} - 1}x_3^{ - {b_{31}} - 1}{ + P_{1,{d_1}}}{P_{2,0}}x_1^{ - {b_{12}} - 1}x_2^{{b_{21}} - 1}{{x'}_3}
		+ {P_{2,{d_2}}}{P_{3,0}}{{x'}_1}x_2^{ - {b_{23}} - 1}x_3^{{b_{32}} - 1}+ A.
	\end{eqnarray*}
It follows that the standard monomials in ${{x_1},{x'_1}, \ldots ,{x_n},{x'_n}}$ are linearly dependent
over $\mathbb{ZP}$, which is a contradiction. Hence the conclusion holds.
\end{proof}

\begin{remark}
In classical cluster algebras, Berenstein, Fomin and Zelevinsky  (\cite[Theorem 1.16]{bfz}) proved that standard monomials in $\{{x_1},{x'_1}, \ldots, {x_n}, {x'_n}\}$
are linearly independent over $\mathbb{ZP}$ if and only if the  directed graph associated to the seed $\big(\widetilde{\mathbf{x}},\widetilde{B}\big)$ is acyclic. The sufficient part was extended to the case of  generalized cluster algebras of geometric types by Bai, Chen, Ding and Xu (\cite[Theorem 3.1]{BCDX-1}).  However, the necessary part for generalized cluster algebras still remains open.
\end{remark}

\begin{example}
Consider the cyclic generalized seed $(\mathbf{\widetilde{x}},\rho,\widetilde{B})$ as follows:
\[\widetilde{\mathbf{x}}={\mathbf{x}}=\{x_1,x_2,x_3\},  \rho= \left\{{\rho _1},{\rho _2}, {\rho _3}\right\}\ and \ \widetilde{B} = B = \left( {\begin{array}{*{20}{c}}
			0&{ - 1}&3\\
			2&0&{ - 3}\\
			{ - 2}&1&0
	\end{array}} \right)\]
where ${\rho _1}=\left\{1,h,1\right\}$, ${\rho _2}=\left\{1,1\right\}$ and ${\rho _3}=\left\{1,d,l,1\right\}$ for any $h, d,l \in \mathbb{Z}.$

Note that  $3 \to 2   \to 1 \to 3$ is an oriented cycle associated to the  generalized seed $( \mathbf{\widetilde{x}},\rho,\widetilde{B}).$

According to the exchange relation, we have
\begin{eqnarray*}
	{x'_1} &=& \frac{{x_2^2 + hx_2^{}x_3^{} + x_3^2}}{{{x_1}}},\\
	{x'_2} &=& \frac{{x_1^{} + x_3^{}}}{{{x_2}}},\\
	{x'_3}& =& \frac{{x_1^3 + lx_1^2x_2 + dx_1x_2^2 + x_2^3}}{{{x_3}}}.
\end{eqnarray*}
A direct calculation shows that
\begin{eqnarray*}
	{x'_1}{x'_2}{x'_3} &=& {x'_1}x_2^2 + {x'_2}x_1^2x_3 + {x'_3}x_2^{} + (hl+1)x_1^2x_2+(l+hd)x_1x_2^{2}\\
	&&+ (h+l)x_1^2x_3+lx_1x_3^2+(hd+1)x_2^2x_3+dx_2x_3^2+(d+hl)x_1x_2x_3\\
    && +hx_1^3+(h+d)x_2^3.
\end{eqnarray*}
\end{example}
Thus the standard monomials in ${x_1},{x'_1}, {x_2},{x'_2} ,{x_3},{x'_3}$
are linearly dependent over $\mathbb{Z}$.
\section*{Acknowledgments}
Ming Ding was supported by NSF of China (No. 12371036) and Guangdong Basic and Applied Basic Research
Foundation (2023A1515011739) and Fan Xu was supported by NSF of China (No. 12031007).

\end{document}